\newtheorem{theorem}{Theorem}[section]
\newtheorem{lemma}{Lemma}[section]
\newtheorem{corollary}{Corollary}[section]
\newtheorem{remark}{Remark}[section]
\newtheorem{definition}{Definition}[section]
\newcommand{\Dxx}{\mathcal{D}_{xx}} \newcommand{\Dyy}{\mathcal{D}_{yy}}
\newcommand{\Dx}{\mathcal{D}_{x}} \newcommand{\Dy}{\mathcal{D}_{y}}
\newcommand{\Dxy}{\mathcal{D}_{xy}}
\newcommand{\Hx}{\mathcal{H}_x} \newcommand{\Hy}{\mathcal{H}_y}
\newcommand{\HH}{\mathcal{H}}
\newcommand{\MM}{\mathcal{M}_\Gamma}
\newcommand{\Bx}{\mathcal{B}_x}
\newcommand{\By}{\mathcal{B}_y} \newcommand{\Bg}{\mathcal{B}_\Gamma}
\newcommand{\Ax}{\mathcal{A}_x}
\newcommand{\Ay}{\mathcal{A}_y}
\newcommand{\ZZ}{\mathcal{Z}}
\newcommand{\UU}{\mathcal{U}}
\newcommand{\LL}{\mathcal{L}}
\newcommand{\Puvg}{\mathcal{P}_{u2v}^g} \newcommand{\Puvb}{\mathcal{P}_{u2v}^b}
\newcommand{\Pvug}{\mathcal{P}_{v2u}^g} \newcommand{\Pvub}{\mathcal{P}_{v2u}^b}
\newcommand{\uu}{\pmb{u}} \newcommand{\vv}{\pmb{v}} \newcommand{\ww}{\pmb{w}}
\newcommand{\nvec}{\pmb{n}} 
\newcommand{\ff}{\pmb{f}} 
\newcommand{\vvarphi}{\pmb{\varphi}}
\newcommand{\xx}{\pmb{x}} 
\newcommand{\ee}{\pmb{e}}
\newcommand{\pphi}{\pmb{\varphi}}
\newcommand{\ttau}{\pmb{\tau}}
\newcommand{\ssigma}{\pmb{\sigma}}
\newcommand{\eepsilon}{\pmb{\varepsilon}}
\newcommand{\triangulation}{\mathscr{K}_h}
\newcommand{\testspace}{\mathscr{V}_h^q}
\newcommand{\edges}{\mathscr{E}_h}\newcommand{\boundaryedges}{\mathscr{E}_h^\partial}
\newcommand{\interioredges}{\mathscr{E}_h^I}
\newcommand{\fdgrid}{\mathbf{x}_u}
\newcommand{\dggrid}{\mathscr{T}_v}
\newcommand{\gluegrid}{\mathscr{T}_g}
\newcommand{\Tu}{\mathcal{T}_u} \newcommand{\Tv}{\mathcal{T}_v}
\newcommand{\Tx}{\mathcal{T}_x}\newcommand{\Ty}{\mathcal{T}_y}
\newcommand\revone[1]{{\color{black} #1}}
\newcommand\revthree[1]{{\color{black} #1}}
\begin{document}

\title{A hybrid numerical method for elastic wave propagation in discontinuous media with complex geometry }

\author{Andreas Granath\thanks{Department of Mathematics and Mathematical Statistics, {Ume\aa} University, Sweden (andreas.granath@umu.se)}\quad and\quad Siyang Wang\thanks{Corresponding author. Department of Mathematics and Mathematical Statistics, {Ume\aa} University, Sweden (siyang.wang@umu.se)}}
\maketitle

\begin{abstract}
  We develop a high order accurate numerical method for solving the elastic wave equation in second-order form. We hybridize the computationally efficient Cartesian grid formulation of finite differences with geometrically flexible discontinuous Galerkin methods on unstructured grids by a penalty based technique. At the interface between the two methods, we construct projection operators for the pointwise finite difference solutions and discontinuous Galerkin solutions based on piecewise polynomials. In addition, we optimize the projection operators for both accuracy and spectrum. We prove that the overall discretization conserves a discrete  energy, and verify optimal convergence in numerical experiments. 
\end{abstract}

\noindent \textbf{Keywords}:
finite difference methods, discontinuous Galerkin methods, hybrid methods , elastic wave equation, high order methods 

\noindent \textbf{MSC}:  65M06, 65M60, 65M12

\section{Introduction}\label{sec: Introduction}

The elastic wave equation sees a lot of applications in the modeling of seismic waves occurring due to earthquakes and seismic events. Realistic problems often involve discontinuous material properties as a result of earth layers or complex geometries, making it impossible to obtain analytical solutions. It is therefore of great importance  to develop numerical schemes in order to obtain approximate solutions. Given the time dependency of the problem, it becomes crucial that the numerical scheme offers stability and sufficient accuracy. 

From the classical dispersion analysis by Kreiss and Oliger \cite{Kreiss3}  and Hagstrom and Hagstrom \cite{Hagstrom}, we know that high order methods are more efficient for wave dominated problems with sufficiently smooth solutions. This has led to a rich development of high order schemes for various wave problems. 

A method which has seen a lot of applications in the context of high order solvers for wave-dominated problems is the finite difference (FD) method. Finite differences are conceptually easy to understand, computationally efficient and straightforward to implement. However, it was challenging to derive a stable and high order finite difference discretization for initial boundary value problems. This challenge has largely been overcome by using finite difference operators satisfying the \textit{summation-by-parts} (SBP) property \cite{Kreiss1}. Operators with this property satisfy a discrete analogue of the integration-by-parts identity, which is the key to derive continuous energy estimates. This method is often successfully combined with the \textit{simultaneous-approximation-term} (SAT) method to implement boundary conditions via penalization \cite{Carpenter}, resulting in an SBP-SAT scheme. Alternatively, boundary conditions can be imposed strongly by the projection method  \cite{Olsson1,Olsson2} or the ghost point  method \cite{Sjogreen}. A drawback of using FD operators however is that they are inherently defined on rectangular grids. To resolve more general geometries, a curvilinear grid is often constructed using a coordinate transformation technique  \cite{Nordstrom1,Svard}. In certain cases, we need to partition the domain into subdomains that can be more easily mapped to a reference domain, resulting in a multi-block finite difference method \cite{Mattsson2}. The elastic wave equation has been successfully discretized with the multiblock SBP finite difference method, see e.g. \cite{Almquist1,Duru1} for the SAT method and \cite{Zhang,ZhangWangPetersson} for the ghost point method. The relation between the SAT method and the ghost point method was discussed in \cite{WangPetersson}. \revthree{It should be noted that the aforementioned papers considers the equation in second order form as this is known to, in general, result in fewer unknowns than first order formulations and improved order of convergence for SBP-SAT discretizations.}

Methods that allow unstructured meshes can be used to effectively resolve complex geometry. Most prominent of these methods are the finite element method (FEM), which allows a flexible triangulation of the computational domain. The continuous Galerkin (CG) method requires solving a linear system at each time step, even with an explicit time integrator. On the other hand, the discontinuous Galerkin (DG) method has a block diagonal mass matrix, and its inverse can be computed blockwise.   The DG method has been applied to the elastic wave equation in quite general settings, see e.g. \cite{Antonietti, Basabe}.

To leverage the computational efficiency of the FD method and the geometric flexibility of the DG method, a hybrid FD-DG method has been proposed. At the FD-DG interface, the discretizations have different induced norms, making it challenging to obtain a stable hybrid method with high order accuracy. This challenge was overcome by Kozdon and Wilcox in \cite{Kozdon}, where an FD-DG discretization was constructed for the wave equation in first-order form with optimal convergence. It was later shown in the setting of non-conforming FD-FD couplings that in order to obtain optimal convergence when considering problems in second-order form, one needs to use pairs of projection operators \cite{Almquist2}. In a recent paper, the work of Kozdon and Wilcox was combined with the notion of projection pairs to solve the wave equation in second-order form, obtaining optimal accuracy \cite{Wang1}. FD-FEM schemes have also been constructed previously for Maxwell's equation \cite{Beilina2013} and the elastic wave equation \cite{Gao}. In \cite{Lisitsa}, instead, a first-order velocity-stress formulation was discretized by DG coupled with a staggered grid FD formulation through a finite volume transition zone. 

In this work, we follow the coupling methodology in \cite{Wang1} and consider the elastic wave equation in second-order form in a bounded, isotropic and possibly discontinuous material. 
The first contribution of this paper is an FD-DG discretization of the elastic wave equation in second-order displacement form. We combine an SBP FD discretization with an interior penalty DG (IPDG) discretization using norm compatible projection pairs. It is shown to converge optimally and proven to satisfy a discrete energy estimate under appropriate choice of penalty parameters, guaranteeing stability. 

The second contribution is in the optimization of the projection operators for accuracy and spectrum, and its impact on  stability. When constructing the projection operators, accuracy conditions are used together with the norm compatibility to set up a system of equations for the coefficients in the projection operators. This system is in general underdetermined and results in a large set of free parameters. We demonstrate that  choosing these parameters without optimization, e.g. setting them to zero, yields a stiff system and instead investigate two different strategies of determining them. More specifically, we choose them to minimize the leading order $l^2$-error as well as two different measures of spectrum; the \revone{Frobenius} norm and an eigenvalue-flattening norm inspired by \cite{Kozdon}. This extends  the work of \cite{Wang1},  where only optimization of accuracy was considered when constructing the operators. The order in which we optimize accuracy and spectrum is also considered. Finally, we demonstrate that the discretization is not sensitive to the distribution of FD and DG nodes along the interface.

The rest of the paper is structured as follows. In Section~\ref{sec: the elastic wave equation}, we introduce the elastic wave equation together with the notion of FD operators satisfying the SBP property and the discontinuous Galerkin method. Then in Section~\ref{sec: the coupled scheme}, we begin by giving a detailed account of the projection operators and their properties, and then prove that the FD-DG discretization satisfies an energy estimate. We then demonstrate the accuracy and stability of the discretization by a series of numerical experiments in Section~\ref{sec: numerical results} and end with a brief discussion and summary in Section~\ref{sec: conclusion}.

\section{The elastic wave equation}

\label{sec: the elastic wave equation}
We consider a bounded domain $\Omega$ that can be partitioned into two subdomains $\Omega_u$ and $\Omega_v$ made up of isotropic materials and with contact along a horizontal interface $\Gamma$. Then the elastic wave equation can be written in component form in terms of the two displacement fields $\uu=[u_1,u_2]^T$ and $\vv=[v_1,v_2]^T$ as 
\begin{align}
    \rho_u\partial_t^2u_1&=\mu_u\Delta u_1+(\mu_u+\lambda_u)\partial_x(\partial_xu_1+\partial_yu_2),\nonumber\\
    \rho_u\partial_t^2u_2&=\mu_u\Delta u_2+(\mu_u+\lambda_u)\partial_y(\partial_xu_1+\partial_yu_2)\quad\text{ for }(x,y,t)\in\Omega_u\times(0,T],\label{eq:ContinuousFormulationU}\\
    \rho_v\partial_{t}^2v_1&=\mu_v\Delta v_1+(\mu_v+\lambda_v)\partial_x(\partial_xv_1+\partial_yv_2),\nonumber\\
    \rho_v\partial_{t}^2v_2&=\mu_v\Delta v_2+(\mu_v+\lambda_v)\partial_y(\partial_xv_1+\partial_yv_2)\quad\text{ for }(x,y,t)\in\Omega_v\times(0,T],\label{eq:ContinuousFormulationV}
\end{align}
where $\mu_u,\lambda_u,\mu_v,\lambda_v$ denotes the Lamé parameters of the respective domains, which are assumed to be constant, as well as the respective densities $\rho_u,\rho_v$ which also are constant. For a more detailed description of the above formulation, see e.g. \cite{Virta}. In order for the problem  to be well-posed, we must also impose initial conditions, boundary conditions and suitable interface conditions. To formulate the boundary conditions, we introduce the \revone{stress tensor} for an arbitrary smooth field $\uu=[u_1,u_2]^T$ as $\sigma(\uu)$ 

\begin{equation}
    \sigma(\uu)=\begin{bmatrix}
        (2\mu_u+\lambda_u)\partial_xu_1+\lambda_u\partial_yu_2 & \mu_u(\partial_yu_1+\partial_xu_2) \\
        \mu_u(\partial_yu_1+\partial_xu_2) & (2\mu_u+\lambda_u)\partial_yu_2+\lambda_u\partial_xu_1
    \end{bmatrix}.
\end{equation}

Moreover, let $\nvec$ denote the unit outward normal to $\partial\Omega/\Gamma$. and introduce the tractions $\ttau(\uu)=\sigma(\uu)\cdot\nvec$ and $\ttau(\vv)=\sigma(\vv)\cdot\nvec$. Then we formulate the traction free boundary conditions as
\begin{align*}
    \ttau(\uu)&=\pmb{0}\quad\text{ along }\partial\Omega_u/\Gamma,\\
    \ttau(\vv)&=\pmb{0}\quad\text{ along }\partial\Omega_v/\Gamma.
\end{align*} 

To ensure continuity of the displacement and traction along the interface, we  impose the following interface conditions
\begin{align}
    \uu&=\vv \qquad\text{ along }\Gamma,\label{eqn: contcond}\\
    \sigma(\vv)\cdot\nvec^++\sigma(\uu)\cdot\nvec^-&=\pmb{0}\qquad\text{ along }\Gamma \label{eqn: traccond},
\end{align}
where $\nvec^+=[0,1]^T$, $\nvec^-=[0,-1]^T$ .

Finally, it should be noted for completeness that the continuous formulation of \eqref{eq:ContinuousFormulationU}-\eqref{eq:ContinuousFormulationV} can readily be shown to satisfy an energy estimate, see e.g. \cite{Appelo}. We begin in Section~\ref{subsec: SBP formulation} by introducing the necessary background to discretize \eqref{eq:ContinuousFormulationU}-\eqref{eq:ContinuousFormulationV} in the SBP-SAT framework, and then in Section~\ref{subsec: dG formulation} we discretize it in the IPDG framework.
\subsection{SBP finite difference formulation}

\label{subsec: SBP formulation}
Let $I=[a,b]$ be an interval, $\xx=[x_0,...,x_N]$ a uniform partition of $I$ with $x_0=a,x_N=b$ and $x_j=a+jh$ where $h$ is the grid spacing. Furthermore, let $f,g$ be functions of sufficient regularity, then $\ff=[f(x_0),..,f(x_N)]^T$ and $\pmb{g}=[g(x_0),..,g(x_N)]^T$ shall denote their grid evaluations. Moreover, let $H=H^T$ be a quadrature matrix which is positive definite such that it approximates $\ff^TH\pmb{g}\approx\int_If(x)g(x)dx$ and induces a norm $\|\ff\|_H^2=\ff^TH\ff$. 

Next, we let $D_1\approx\partial_x$ and $D_2\approx\partial_x^2$ be narrow stencil FD approximations of the first and second derivatives with interior error $\mathcal{O}(h^{2p})$ and $\mathcal{O}(h^p)$ at a few points along the boundary due to the one-sided stencil. We then say that the operator $D_2$ is of order $2p$. The SBP property for the differential operators are then defined as follows \cite{Mattsson1}.
\begin{definition}
  The matrices $D_1$ and $D_2$  are said to satisfy the summation-by-parts (SBP) property if it holds
    $$HD_1+(HD_1)^T=B,$$
    $$D_2=H^{-1}(-A+BS),$$
    where $B=B_n-B_0$ with $B_n=\operatorname{diag}(0,\cdot\cdot\cdot,1), B_0=\operatorname{diag}(1,0,\cdot\cdot\cdot,0)$, $A$ is symmetric positive semi-definite and $S$ is an approximation of the first derivative at the boundary.
    \label{def:SBPproperty}
\end{definition}
We note, as the name suggests, that the SBP property is a discrete analogue of the \textit{integration-by-parts} technique used in the continuous setting. However, it does not fully mimic  the form of the continuous one. More specifically, to prove stability of the scheme, it will be useful to rewrite the matrix $A$ in Definition~\ref{def:SBPproperty} in terms of the first derivative operator $D_1$. Therefore, we shall need the notion of \textit{fully compatible} operators \cite{Mattsson4,Mattsson5}. 
\begin{definition}
    Let $D_1$ and $D_2$ be $2p$'th-order narrow-stencil SBP operators. If $D_2$ can be written in the following form
    $$D_2=H^{-1}(-D_1^THD_1-R+BS)$$
   and the remainder $R$ is symmetric positive semi-definite, then the operators $D_1$ and $D_2$ are said to be compatible. Moreover, they are said to be fully compatible if instead
    $$D_2=H^{-1}(-D_1^THD_1-R+BD_1).$$
    \label{def:fullycompatible}
\end{definition}
The remainder $R$ is shown to be positive semi-definite for orders $2p=2,4,6,8$ in Lemma 3.2 of \cite{Mattsson4}. Note that the second $D_2$ operator is obtained from the first one by directly replacing $S$ with $D_1$. However, as $S$ has a truncation error $\mathcal{O}(h^{p+1})$ and by definition $D_1$ instead has truncation error $\mathcal{O}(h^{p})$ on the boundary, using a fully compatible operator will lead to a reduction of one order on the boundary.
\\

We now turn to the SBP-SAT discretization of \eqref{eq:ContinuousFormulationU}-\eqref{eq:ContinuousFormulationV}. Let the domain $\Omega_u$ be rectangular and discretized uniformly by $m$ grid points in the $x$-direction and $n$ grid points in the $y$-direction. We introduce two convenient shorthands for the notation. First, operators in a non-calligraphic font with a lowered $x$ or $y$ denotes square operators with size corresponding to the indicated dimension. Using this notation, we let $I_x$ and $I_y$ be identity matrices of dimensions $m\times m$ and, $n\times n$ respectively. Analogously, let  $D_x,D_y$, $D_{xx},D_{yy}$ be first and second derivatives satisfying the SBP property and $S_x,S_y$ the derivative along the boundary in $x$ and $y$ directions. Also, let $H_x,H_y$ be the corresponding quadrature matrices and $B_x,B_y$ as in Definition~\ref{def:SBPproperty}.

Then, in order to adapt the operators to the two-dimensional problem, we utilize the tensor-product formulation. To distinguish these operators from the one-dimensional ones, we write them using calligraphic font, with a subscript indicating the dimension it acts along. An operator $\mathcal{F}$ acting along the $x$-direction would be written as $\mathcal{F}_x=I_2\otimes F_x\otimes I_y$, while it would be $\mathcal{F}_y=I_2\otimes I_x\otimes F_y$ in the $y$-direction. We refer to the calligraphic quantities as being of \textit{full-size}.

 Moreover, the following material property matrices will be required to formulate both the SBP-SAT discretization itself and the traction operators
 
\begin{align*}
    \LL_1&=\begin{bmatrix}
        (2\mu_u+\lambda_u)(I_x\otimes I_y) & 0 \\
        0 & \mu_u(I_x\otimes I_y)
    \end{bmatrix},\\
    \LL_2&=\begin{bmatrix}
        0 & \lambda_u(I_x\otimes I_y)\\
        \mu_u(I_x\otimes I_y) & 0
    \end{bmatrix},\\
    \LL_3&=\begin{bmatrix}
        \mu_u(I_x\otimes I_y) & 0 \\
        0 & (2\mu_u+\lambda_u)(I_x\otimes I_y)
    \end{bmatrix}.
\end{align*}

To formulate the traction, we introduce the following operators
$$\mathcal{T}_x=\LL_1\Dx+\LL_2\Dy\qquad\mathcal{T}_y=\LL_3\Dy+\LL_2^T\Dx.$$

so that the tractions on the boundaries are obtained by multiplying with $\pm\Bx,\pm\By$ to only pick up the values along the boundaries. This is done as the operators are defined on the entire mesh $\Omega_u$. Using the above notation, we then can formulate the SBP-SAT discretization for \eqref{eq:ContinuousFormulationU} as
\begin{equation}
    \rho_u\uu_{tt}=\LL_1\Dxx\uu+\LL_3\Dyy\uu+(\LL_2+\LL_2^T)\Dxy\uu+\alpha\Hx^{-1}\Bx\Tx\uu+\beta\Hy^{-1}\By\Ty\uu,
    \label{eq:FDformulation}
\end{equation}
where $\uu=[u_{11},...,u_{1n},...,u_{mn}]^T$ denotes the grid evaluation of the field with $u_{ij}\approx u(x_i,y_j,t)$ and $\alpha,\beta$ are penalty parameters determined by stability analysis.

Note that the last two terms correspond to the SAT-terms which impose the traction-free boundary condition. In the SBP-DG formulation derived in Section~\ref{sec: stability proof}, we only consider the contribution from the term that corresponds to the southern boundary and therefore the interface $\Gamma$. The SBP-SAT discretization \eqref{eq:FDformulation} is shown to be stable under the choice $\alpha=\beta=-1$ in \cite{Duru2}.

\subsection{Discontinuous Galerkin formulation}
\label{subsec: dG formulation}
Now consider the domain $\Omega_v$ as in the previously mentioned setup and let $\triangulation$ denote a  quasi-uniform triangulation  such that $\Omega_v=\bigcup_{K\in\triangulation}K$, where each element has diameter $h_K$. Moreover, let $h_v=\min_{K}h_K$. To discretize \eqref{eq:ContinuousFormulationV} we use the symmetric interior penalty discontinuous Galerkin method (IPDG) outlined in \cite{Dupont}. The approximation will be based on the following space
$$\testspace=\{\varphi : \varphi|_K\in\mathscr{P}_q(K),K\in\triangulation\},$$
where $\mathscr{P}_q(K)$ denotes the space of polynomials of order $q$ on the element $K$. Before formulating the IPDG form of the elastic wave equation, we must introduce a partition of the edges in the triangulation. Following the notation in \cite{Dupont}, let  $\edges=\{\gamma_i,i=1,...,n_e\}$ denote the set of all edges in $\triangulation$. Moreover, define an order such that $\gamma_i\subset\Omega_v$ for $i=1,..,n_i$ and $\gamma_i\subset \partial\Omega$ for $i=n_i+1,..,n_e$. Then the definition of \textit{interior edges} and \textit{boundary edges} follows naturally as the ordered subsets of $\edges$
\begin{align*}
    \interioredges&=\{\gamma_i : \gamma_i\in\edges, i=0,..,n_i\},\\
    \boundaryedges&=\{\gamma_i : \gamma_i\in \edges, i=n_i+1,...,n_e\}. 
\end{align*}
To couple the solution between neighbouring elements we shall need the notion of \textit{jump} and \textit{average} operators. Let $\gamma\in\interioredges$ be an edge shared by the elements $K^+,K^-\in\triangulation$ with unit outward pointing normals $\nvec^+,\nvec^-$ respectively and $\varphi$ an arbitrary piecewise smooth function.  Then the jump $\llbracket\varphi\rrbracket_\gamma$ and average $\{\varphi\}_\gamma$ across the edge $\gamma$ are defined depending on if it is an interior or boundary edge of $K^+$ accordingly
\begin{alignat*}{2}
    \llbracket\vvarphi\rrbracket_\gamma&=\varphi_+\nvec^+ +\varphi_-\nvec^-,\qquad \{\varphi\}_\gamma&=\frac{\varphi_++\varphi_-}{2}\qquad\text{ if }\gamma\in\interioredges,\\
    \llbracket\vvarphi\rrbracket_\gamma&=\varphi_ +\nvec^+,& \{\varphi\}_\gamma =\varphi_+,\qquad\text{ if }\gamma\in\boundaryedges.
\end{alignat*}
For piecewise smooth vector-valued functions $\pmb{\phi}$, the average is defined in analogous manner as $\{\pmb{\phi}\}_\gamma=\frac{\pmb{\phi}^++\pmb{\phi}^-}{2}$ for interior edges and $\{\pmb{\phi}\}_\gamma=\pmb{\phi}^+$ for boundary edges. \revone{ Likewise, we set the jump to be $\llbracket\pmb{\phi}\rrbracket_\gamma=\frac{\pmb{\phi}^+-\pmb{\phi}^-}{2}$ for interior edges and $\llbracket\pmb{\phi}\rrbracket_\gamma=\pmb{\phi}^+$ for boundary edges.}
\\

To pose the IPDG formulation, we reformulate \eqref{eq:ContinuousFormulationV} into vector form. Letting $\vv=[v_1,v_2]^T$ and using that $$\nabla\otimes\nabla=\begin{bmatrix}
    \partial_x^2 & \partial^2_{xy}\\
    \partial^2_{xy} & \partial_y^2,
\end{bmatrix}$$
we can reformulate \eqref{eq:ContinuousFormulationV} in terms of the field $\vv$ as
\begin{equation}
    \rho_v\partial_t^2\vv=\mu_v\Delta\vv+(\mu_v+\lambda_v)(\nabla\otimes\nabla)\vv.
\end{equation}
Multiplying with $\vvarphi\in\testspace\times\testspace$ and integrating over an element $K\in\triangulation$ allows us to formulate the problem locally on $K$ as
$$\int_K\rho_v\vvarphi\cdot\partial_t^2\vv_h dA=\int_K\mu_v\vvarphi\cdot\Delta\vv_h dA+\int_K(\mu_v+\lambda_v)\vvarphi\cdot[(\nabla\otimes\nabla)\vv_h] dA.$$
Summing the above equation over each element in $\triangulation$, integrating by parts and using the definition of jumps and averages, we can now state the IPDG formulation of the traction-free problem as; for each $t\in(0,T]$, find $\vv_h\in\testspace\times\testspace$ such that
\begin{align}
    \sum_{K_\in\triangulation}(\rho_v \partial_t^2\vv_h,\vvarphi)_K=&-\sum_{K\in\triangulation}\int_K\lambda_v(\nabla\cdot\vv_h)(\nabla\cdot\vvarphi)+\mu_v(\nabla\vv_h+\nabla\vv_h^T):\nabla\vvarphi dA\nonumber\\
    &
    +\sum_{\gamma\in\interioredges}\int_\gamma\{\ttau^\gamma(\vv_h)\}_\gamma\cdot\llbracket\vvarphi\rrbracket_\gamma+\{\ttau^\gamma(\vvarphi)\}_\gamma\cdot\llbracket\vv_h\rrbracket_\gamma\nonumber
    \\
    &-\frac{\alpha}{h}(2\mu_v+\lambda_v)\llbracket\vv_h\rrbracket_\gamma\cdot\llbracket\vvarphi\rrbracket_\gamma dl
    \label{eq:dgformulation}
\end{align}
for all $\vvarphi\in\testspace\times\testspace$. \revone{Here the operation $A:B=\sum_{ij}A_{ij}B_{ij}$ denotes matrix contraction}. The first row on the right-hand side is obtained by Green's identity, together with the first term on the second row. The following two terms are added to ensure symmetry and stability of the overall scheme. Notably, they vanish in limit when the mesh size goes to zero, thereby still approximating \eqref{eq:ContinuousFormulationV}. Finally, the stability parameter $\alpha$ needs to be chosen suitably large for \eqref{eq:dgformulation} to satisfy an energy estimate \cite{Antonietti}. A discussion on the relation between \eqref{eq:ContinuousFormulationV} and \eqref{eq:dgformulation} is presented in \cite{Basabe}.

 To end this section, we state two estimates needed for the stability analysis of the coupled scheme. First, we bound the terms on the right-hand side of the first row in \eqref{eq:dgformulation} by the norm of the stress tensor. For notational convenience let $\varepsilon(\vv)$ be the symmetric part of the Jacobian of $\vv$, called the \textit{strain-tensor}, then the following estimate holds.
 \begin{lemma}
\revone{Let the Lamé parameters $\mu_v,\lambda_v$ be positive and consider the case of plane strain}, then the $L^2$ inner-product of the stress tensor $\sigma(\vv)$ and strain tensor $\varepsilon(\vv)$ can be bounded in terms of the $L^2$-norm of $\sigma(\vv)$ as follows 
\begin{equation}
    \int_{\Omega_v}\sigma(\vv):\varepsilon(\vv)dA\geq\frac{1}{4\mu_v+2\lambda_v}\|\mathbb{\sigma}(\vv)\|_{L^2(\Omega_v)}^2,
\end{equation}

\label{lemma:contractionestimate}
 \end{lemma}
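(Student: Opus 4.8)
The plan is to reduce the integral inequality to a pointwise matrix inequality, which then integrates trivially over $\Omega_v$. The starting point is the constitutive relation: reading off the entries of $\sigma(\vv)$ and $\varepsilon(\vv)$, one verifies directly that $\sigma(\vv) = 2\mu_v\,\varepsilon(\vv) + \lambda_v\,\mathrm{tr}(\varepsilon(\vv))\,I$, where $I$ is the $2\times 2$ identity matrix. Since the claimed bound follows by integration once the pointwise estimate $\sigma(\vv):\varepsilon(\vv) \geq \frac{1}{4\mu_v+2\lambda_v}\,\sigma(\vv):\sigma(\vv)$ is established, I would focus entirely on this pointwise inequality, suppressing the argument $\vv$ and writing $\sigma,\varepsilon$ for the two symmetric matrices at a fixed point.

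Next, I would express both contractions in terms of the two invariants $\varepsilon:\varepsilon$ and $(\mathrm{tr}\,\varepsilon)^2$, using $I:\varepsilon = \mathrm{tr}\,\varepsilon$ and --- this is where the plane-strain (two-dimensional) hypothesis enters --- the identity $I:I = 2$. A short computation gives $\sigma:\varepsilon = 2\mu_v\,(\varepsilon:\varepsilon) + \lambda_v(\mathrm{tr}\,\varepsilon)^2$ and $\sigma:\sigma = 4\mu_v^2\,(\varepsilon:\varepsilon) + (4\mu_v\lambda_v + 2\lambda_v^2)(\mathrm{tr}\,\varepsilon)^2$.

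Finally, I would substitute these into the difference and simplify. Writing the constant as $\frac{1}{4\mu_v+2\lambda_v} = \frac{1}{2(2\mu_v+\lambda_v)}$, the coefficients are arranged so that the $(\mathrm{tr}\,\varepsilon)^2$ terms cancel exactly, leaving $\sigma:\varepsilon - \frac{1}{4\mu_v+2\lambda_v}\,\sigma:\sigma = \frac{2\mu_v(\mu_v+\lambda_v)}{2\mu_v+\lambda_v}\,(\varepsilon:\varepsilon)$. Since $\mu_v,\lambda_v > 0$ and $\varepsilon:\varepsilon \geq 0$, the right-hand side is nonnegative, which is precisely the pointwise inequality; integrating over $\Omega_v$ yields the lemma.

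The argument is essentially algebraic, so there is no deep obstacle. The only point requiring care is recognizing that the specific constant $4\mu_v+2\lambda_v$ is tuned to annihilate the trace contribution, and keeping track of the dimension-dependent identity $I:I = 2$, which is exactly why the statement is restricted to the plane-strain setting. One could alternatively bound the trace term crudely via $(\mathrm{tr}\,\varepsilon)^2 \leq 2\,(\varepsilon:\varepsilon)$, but the exact cancellation makes any such estimate unnecessary and delivers the sharp constant.
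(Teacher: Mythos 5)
Your proof is correct, but it takes a genuinely different route from the paper's. You work with the forward constitutive law $\sigma = 2\mu_v\varepsilon + \lambda_v\operatorname{tr}(\varepsilon)I$ (which indeed matches the paper's componentwise definition of $\sigma(\vv)$) and expand both $\sigma:\varepsilon$ and $\sigma:\sigma$ in the two invariants $\varepsilon:\varepsilon$ and $(\operatorname{tr}\varepsilon)^2$, observing that the constant $\tfrac{1}{4\mu_v+2\lambda_v}$ exactly annihilates the trace contribution. The paper instead inverts the law: it writes $\eepsilon_v = \mathbf{C}\ssigma_v$ in Voigt notation with the plane-strain compliance matrix $\mathbf{C}$, reduces the claim to the matrix inequality $\ssigma_v^T\mathbf{C}\ssigma_v \geq k\,\ssigma_v^T\ssigma_v$, and bounds $k$ by the smallest eigenvalue $\lambda_{\min}(\mathbf{C}) = \tfrac{1}{2(\mu_v+\lambda_v)}$, of which $\tfrac{1}{4\mu_v+2\lambda_v}$ is a convenient under-estimate. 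Each approach buys something. The paper's eigenvalue route reveals that the optimal constant is actually $\tfrac{1}{2(\mu_v+\lambda_v)}$, so your closing remark that the exact cancellation ``delivers the sharp constant'' should be softened: your computation is exact, but the lemma's constant is not the largest admissible one (your own identity shows the leftover term $\tfrac{2\mu_v(\mu_v+\lambda_v)}{2\mu_v+\lambda_v}\,\varepsilon:\varepsilon$ is strictly positive whenever $\varepsilon \neq 0$). Conversely, your route is cleaner on a technical point the paper glosses over: in Voigt notation $\ssigma_v^T\ssigma_v = \sigma_{11}^2+\sigma_{22}^2+\sigma_{12}^2$, which undercounts the shear entry relative to the Frobenius contraction $\sigma:\sigma = \sigma_{11}^2+\sigma_{22}^2+2\sigma_{12}^2$ appearing in $\|\sigma(\vv)\|_{L^2(\Omega_v)}^2$; to be rigorous one must carry the weight matrix $\operatorname{diag}(1,1,2)$ through the eigenvalue argument (the conclusion survives, since the shear eigenvalue $\tfrac{1}{\mu_v}$ of $\mathbf{C}$ still dominates $2k$). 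Your pointwise tensor expansion never leaves the contraction notation, so this bookkeeping issue simply does not arise.
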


\begin{proof}
Using Hooke's law, we can write the strain tensor in terms of the stress tensor in \revone{Voigt} notation as $\eepsilon_v(\vv)=\mathbf{C}\ssigma_v(\vv)$ where $\mathbf{C}$ is the compliance matrix, $\ssigma_v(\vv)=[\sigma_{11}(\vv),\sigma_{22}(\vv),\sigma_{12}(\vv)]^T$ and $\eepsilon_v(\vv)=[\varepsilon_{11}(\vv),\varepsilon_{22}(\vv),2\varepsilon_{12}(\vv)]^T$. In the setting of plane strain, $\mathbf{C}$ is given by

\begin{equation}\mathbf{C}=\begin{bmatrix}
  \frac{2\mu+\lambda}{4\mu(\mu+\lambda)} & -\frac{\lambda}{4\mu(\mu+\lambda)} & 0\\
  -\frac{\lambda}{4\mu(\mu+\lambda)} & \frac{2\mu+\lambda}{4\mu(\mu+\lambda)} & 0\\
  0 & 0& \frac{1}{\mu},
\end{bmatrix}
\label{eq:compliancematrix}
\end{equation}
which is clearly symmetric and real from the definition of the Lamé parameters \cite{Larson}. The desired inequality can therefore be reformulated as finding a lower bound on a constant $k$ such that $\ssigma_v^T\mathbf{C}\ssigma_v\geq k\ssigma_v^T\ssigma_v$ holds. From the properties of $\mathbf{C}$ it is clear that $k$ is bounded by the smallest eigenvalue, which can be shown using \eqref{eq:compliancematrix} to yield $k\leq\frac{1}{2(\mu+\lambda)}$. Therefore, it must in particular hold for the choice $k=\frac{1}{4\mu+2\lambda}$ by the assumption on positivity of the Lamé parameters.
\end{proof}
\begin{lemma}
    Let $\Gamma\subset\partial\Omega$ and $h_v$ be the smallest element size in $\triangulation$, then there exist a constant $C>0$ only dependent on the quality of $\triangulation$ such that the stress tensor $\sigma(\vv_h)$ satisfies the following bound
    $$\|\sigma(\vv_h)\|_{\Omega_v}^2\geq Ch_v\|\sigma(\vv_h)\cdot\nvec\|_{L^2(\Gamma)}^2\quad\text{ for all }\vv_h\in\testspace\times\testspace.$$
\end{lemma}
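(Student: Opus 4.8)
The plan is to recognize the claimed bound as a discrete inverse trace inequality for the piecewise-polynomial matrix field $\sigma(\vv_h)$, and to reduce it to the classical polynomial trace estimate applied element-by-element to those elements that touch $\Gamma$. The crucial structural observation is that since each component of $\vv_h$ restricts to a polynomial in $\mathscr{P}_q(K)$ on every $K\in\triangulation$, and $\sigma(\vv_h)$ is assembled solely from first derivatives of $\vv_h$ with constant Lamé coefficients, every entry of $\sigma(\vv_h)$ restricts to a polynomial in $\mathscr{P}_{q-1}(K)$. Hence the scaled trace inequality for polynomials is available on each element, and this is what drives the whole argument.

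First I would dispose of the normal direction by a pointwise bound. For the unit vector $\nvec$ one has, at every point, $|\sigma(\vv_h)\cdot\nvec|^2\le \sigma(\vv_h):\sigma(\vv_h)$, since the operator norm of a matrix is dominated by its Frobenius norm and $\nvec$ is of unit length. Because $\Gamma\subset\partial\Omega$, the edges $e\subset\Gamma$ are boundary edges of $\triangulation$, each belonging to exactly one element $K_e$, so $\sigma(\vv_h)\cdot\nvec$ is single-valued on $\Gamma$ and no jump contributions arise. Integrating the pointwise bound and splitting $\Gamma$ into its constituent edges gives
$$\|\sigma(\vv_h)\cdot\nvec\|_{L^2(\Gamma)}^2\le\sum_{e\subset\Gamma}\int_e\sigma(\vv_h):\sigma(\vv_h)\,dl.$$

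Next I would invoke the classical trace inequality: for a shape-regular triangle $K$ with edge $e\subset\partial K$ and any polynomial $w$ of degree at most $q-1$, one has $\|w\|_{L^2(e)}^2\le C_{\mathrm{tr}}\,h_K^{-1}\|w\|_{L^2(K)}^2$, where $C_{\mathrm{tr}}$ depends only on $q$ and the shape-regularity constant of $\triangulation$. Applying this entrywise to $\sigma(\vv_h)$ and summing over entries, then using $h_{K_e}\ge h_v$ so that $h_{K_e}^{-1}\le h_v^{-1}$, yields
$$\int_e\sigma(\vv_h):\sigma(\vv_h)\,dl\le C_{\mathrm{tr}}\,h_v^{-1}\int_{K_e}\sigma(\vv_h):\sigma(\vv_h)\,dA.$$
Summing over the boundary edges $e\subset\Gamma$, and noting that the corresponding $K_e$ are distinct and form a subset of $\triangulation$, bounds the right-hand side by $C_{\mathrm{tr}}\,h_v^{-1}\|\sigma(\vv_h)\|_{\Omega_v}^2$. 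Rearranging then produces the asserted inequality with $C=C_{\mathrm{tr}}^{-1}$.

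The main obstacle is not any single estimate but rather bookkeeping: tracking uniformity of the constant and the correct power of $h_v$. One must verify that the trace-inequality constant can be taken independent of the element, which is precisely where the quasi-uniformity and shape-regularity of $\triangulation$ enter and matches the hypothesis that $C$ depends only on mesh quality (with the polynomial degree $q$ fixed and hence absorbable into $C$). One must also check that replacing the element-local $h_{K_e}$ by the global minimum $h_v$ is performed in the direction that preserves the inequality, which is the case here. I expect these to be routine once the reduction to a single edge is set up; the genuine content is the recognition that $\sigma(\vv_h)$ is itself a piecewise polynomial, so that a bulk-to-trace estimate with the $h_v$-scaling is legitimately available.
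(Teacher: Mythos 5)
Your proof is correct and follows essentially the same route as the paper's: both reduce the boundary term to a sum over the edges on $\Gamma$, apply an element-wise inverse/trace estimate (reference element plus scaling, legitimate because $\sigma(\vv_h)$ is piecewise polynomial) to pass from an edge to its parent triangle, and then replace $h_K^{-1}$ by $h_v^{-1}$ before summing over the whole mesh. The only imprecision is your claim that the parent elements $K_e$ are distinct --- a triangle may have two edges on $\Gamma$, which is why the paper instead sums $\|\sigma(\vv_h)\cdot\nvec\|^2_{L^2(\partial K)}$ over the set $\triangulation^\Gamma$ --- but this only inflates your constant by a bounded factor, so the conclusion that $C$ depends only on the mesh quality is unaffected.
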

\begin{proof}
    Let $\Gamma\subset\partial\Omega$ and introduce $\edges^\Gamma\subset\boundaryedges$ as the set of boundary edges having non-empty intersection with $\Gamma$. Moreover, let $\triangulation^\Gamma\subset\triangulation$ denote the set of triangles with at least one edge belonging to $\edges^\Gamma$ and $\|\vv_h\|^2_{L^2(K)}$ the $L^2$-norm of the discrete approximation on a triangle $K\in\triangulation$. Then there exist a positive constant $C'>0$ dependent on the quality of $\triangulation$ such that
    
    \begin{align*}\|\sigma(\vv_h)\cdot\nvec\|^2_{L^2(\Gamma)}&=\sum_{\gamma\in\edges^\Gamma}\|\sigma(\vv_h)\cdot\nvec\|^2_{L^2(\gamma)}\leq\sum_{K\in\triangulation^\Gamma}\|\sigma(\vv_h)\cdot\nvec\|^2_{L^2(\partial K)}\\
    &\leq \sum_{K\in\triangulation^\Gamma}\frac{C'}{h_K}\|\sigma(\vv_h)\|_{L^2(K)}^2
    \leq\frac{C'}{h_v}\sum_{K\in\triangulation}\|\sigma(\vv_h)\|_{L^2(K)}^2\\
    &=\frac{C'}{h_v}\|\sigma(\vv_h)\|^2_{L^2(\Omega_v)},
    \end{align*}
    for $\vv_h\in\testspace\times\testspace$. To obtain the first inequality on the second row we use an inverse-type estimate on a reference element followed by a scaling argument. The second inequality follows from $h_v=\min_{K\in\triangulation}h_K$. Hence we can conclude that there exist a positive constant $C>0$ such that
    $$\|\sigma(\vv)\|^2_{L^2(\Omega_v)}\geq Ch_v\|\sigma(\vv)\cdot\nvec\|^2_{L^2(\Gamma)}$$
    and the lemma follows.
    \label{lemma:inverseestimate}
\end{proof}

\section{The FD-DG discretization}
\label{sec: the coupled scheme}
In this section, we shall present an in-depth discussion of the methodology used to couple the two grids, as well as the construction and properties of the projection operators used. Then we end the section by proving stability of the semi-discretization by showing that it satisfies a discrete energy estimate.
\subsection{The FD-DG interface and projection operators}
\label{sec: interface}
To couple the FD and DG domains introduced in Section~\ref{sec: the elastic wave equation} we use the notion of norm compatible operators developed by Kozdon and Wilcox in \cite{Kozdon}. The assumption of norm compatibility will prove to be crucial when deriving an energy estimate. In \cite{Kozdon}, the model problem was the wave equation in first order form. When applied to a second-order formulation with non-conforming FD-FD interfaces, however, using the operators from \cite{Kozdon} resulted in suboptimal convergence \cite{Lundquist}. To recover optimal convergence, we therefore use the notion of projection pairs introduced in \cite{Almquist2}.

As in the previous section, let $\Gamma=\partial\Omega_u\cap\partial\Omega_v$ denote the interface. Moreover, let $\fdgrid$ be the uniform FD nodes along $\Gamma$, ordered left to right, and $\mathscr{T}_{\tilde{u}}$ the mesh whose element boundaries are defined by the nodes in $\fdgrid$. Similarly, we let $\dggrid$ denote the edges in the DG mesh along the interface, and $\gluegrid$ the mesh with both FD and DG nodes as element boundaries. The setup of these grids are  illustrated in Figure~\ref{fig:meshes}. For completeness, we also introduce the corresponding function spaces $\mathscr{F}_{\tilde{u}}$, $\mathscr{F}_g$ and $\mathscr{F}_v$ of piecewise continuous polynomials of order $q$ over $\mathscr{T}_{\tilde{u}}$,$\gluegrid$ and $\dggrid$ respectively.

\begin{figure}[h!]
    \centering
  \includegraphics[width=0.7\textwidth]{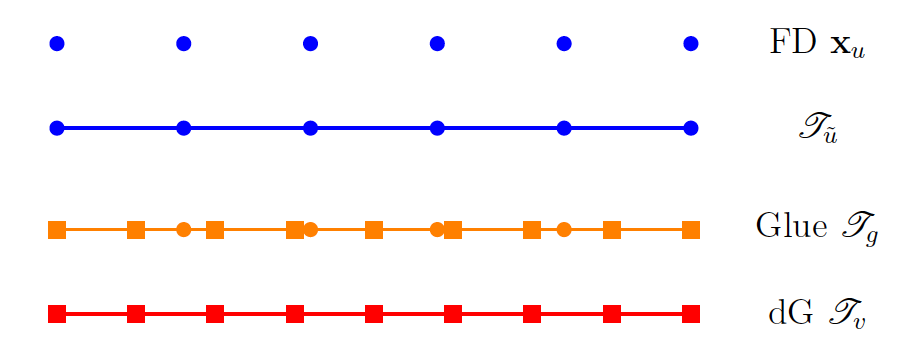}
    \caption{An illustration of the FD mesh $\fdgrid$, the intermediary piecewise continuous grid $\mathscr{T}_{\Tilde{u}}$, glue grid $\mathscr{T}_g$ and DG grid $\mathscr{T}_v$. Although the piecewise continuous intervals are drawn as if the nodes are overlapping, each element should be thought of being disjoint with repeated edge nodes.}
    \label{fig:meshes}
\end{figure}

Now let $f$ be a smooth function defined on $\Gamma$, with $\ff_u$ representing its grid evaluation on $\fdgrid$. We introduce the projection operator $\Pi_{u2v}$ such that $\Pi_{u2v}\ff_u$ contains the coefficients of a piecewise continuous polynomial $\mathscr{P}f$ on $\dggrid$ that approximates $f$. Let $\{\varphi^\gamma_j\}_{j=0}^q$ denote a set of local nodal basis functions in $\mathscr{F}_v$ associated with element $\gamma$ in $\dggrid$, i.e. for a node $x_k$ in the element $\gamma$, $\varphi_j^\gamma(x_k)=\delta_{jk}$, where $\delta_{jk}$ denotes the Dirac-delta. Moreover, let  $(\Pi_{u2v}\ff_u)^\gamma_j$ denote the projected value at node $j$ in element $\gamma$. Then, since the basis is nodal,  $\mathscr{P}f(x)$ can be expanded in the element $\gamma$ as $\mathscr{P}f(x)|_\gamma=\sum_{j=0}^q(\Pi_{u2v}\ff)^\gamma_j\varphi_j^\gamma(x)$. Similarly, we can construct an operator $\Pi_{v2u}$, transforming the coefficients of the piecewise polynomial back to the FD grid. As demonstrated in Figure~\ref{fig:meshes}, we do not force the nodes in $\fdgrid$ and $\dggrid$ to align. To remedy this, we introduce the aforementioned glue grid, $\gluegrid$ having the nodes in $\fdgrid$ and $\dggrid$ as element boundaries. To this end, we can first transform pointwise values from $\fdgrid$ to piecewise continuous polynomials on $\mathscr{T}_{\Tilde{u}}$ via the operator $\Pi_{u2\tilde{u}}$. \revthree{The operator $\Pi_{u2\tilde{u}}$ is therefore in general not exact. However, note that both $\mathscr{F}_v$ and $\mathscr{F}_{\tilde{u}}$ are subspaces of $\mathscr{F}_g$}. We can therefore construct an exact projection operator $\Pi_{g2\tilde
u}$ from $\mathscr{F}_g$ to $\mathscr{F}_{\tilde{u}}$, followed by another similar operator $\Pi_{g2v}$ from $\mathscr{F}_g$ to $\mathscr{F}_v$. The product of these then forms our compound projection operators $\Pi_{u2v}=\Pi_{g2v}\Pi_{\tilde{u}2g}\Pi_{u2\tilde{u}}$ and $\Pi_{v2u}=\Pi_{\tilde{u}2u}\Pi_{g2\tilde{u}}\Pi_{v2g}$. In order to prove stability of the coupled scheme, we utilize the following notion of \textit{norm-compatible} operators.

\begin{definition}
    Let $\mathscr{T}_k$ and $\mathscr{T}_l$ be two meshes with associated function spaces $\mathscr{F}_k,\mathscr{F}_l$. Moreover, let $M_k$ and $M_l$ denote the mass matrices of the function spaces. Then the pair of projection operators $\Pi_{k2l},\Pi_{l2k}$ are said to be norm compatible if it holds
    $$M_l\Pi_{k2l}=(M_k\Pi_{l2k})^T.$$
    \label{def:normcompatibility}
\end{definition}
We construct $\Pi_{u2\tilde{u}}$ such that the pair $\Pi_{u2\tilde{u}},\Pi_{\tilde{u}2u}$ is norm compatible. Then, as the remaining projection operators correspond to projections between subspaces, they are norm compatible by construction. Since $\mathscr{F}_{\tilde{u}}$ and $\mathscr{F}_v$ are subspaces of the glue-space $\mathscr{F}_g$, the following result is implied.
\begin{corollary}
    Let $\Pi_{u2v}=\Pi_{g2v}\Pi_{\tilde{u}2g}\Pi_{u2\tilde{u}}$ and $\Pi_{v2u}=\Pi_{\tilde{u}2u}\Pi_{g2\tilde{u}}\Pi_{v2g}$, then the pair $(\Pi_{u2v},\Pi_{v2u})$ necessarily satisfies
    $$M_v\Pi_{u2v}=(H_u\Pi_{v2u})^T.$$
\end{corollary}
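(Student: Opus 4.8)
The plan is to reduce the statement to a telescoping product of three elementary norm-compatibility relations, one for each factor appearing in the composite operators. First I would record the three relations that the construction supplies. The outer pair $(\Pi_{g2v},\Pi_{v2g})$ and the inner pair $(\Pi_{\tilde{u}2g},\Pi_{g2\tilde{u}})$ are projections between nested spaces: since $\mathscr{F}_v,\mathscr{F}_{\tilde{u}}\subset\mathscr{F}_g$, the operators $\Pi_{v2g}$ and $\Pi_{\tilde{u}2g}$ are the exact inclusions, while $\Pi_{g2v}$ and $\Pi_{g2\tilde{u}}$ are the $L^2$ projections onto the respective subspaces. For such a pair, Galerkin orthogonality $\langle\Pi_{g2v}f,\psi\rangle=\langle f,\psi\rangle$ for all $\psi\in\mathscr{F}_v$ translates, in coordinates, into $M_v\Pi_{g2v}=(M_g\Pi_{v2g})^T$, which is exactly the relation in Definition~\ref{def:normcompatibility}; the identical argument gives $M_g\Pi_{\tilde{u}2g}=(M_{\tilde{u}}\Pi_{g2\tilde{u}})^T$. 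The third relation is the FD pair $M_{\tilde{u}}\Pi_{u2\tilde{u}}=(H_u\Pi_{\tilde{u}2u})^T$, with $H_u$ playing the role of the mass matrix on the finite difference side, which holds by the construction of $\Pi_{u2\tilde{u}}$.

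With these in hand I would substitute and let the intermediate mass matrices cancel in sequence. Starting from the definition,
$$M_v\Pi_{u2v}=M_v\,\Pi_{g2v}\,\Pi_{\tilde{u}2g}\,\Pi_{u2\tilde{u}},$$
I replace $M_v\Pi_{g2v}$ by $\Pi_{v2g}^TM_g$ (using symmetry of $M_g$), then $M_g\Pi_{\tilde{u}2g}$ by $\Pi_{g2\tilde{u}}^TM_{\tilde{u}}$, and finally $M_{\tilde{u}}\Pi_{u2\tilde{u}}$ by $\Pi_{\tilde{u}2u}^TH_u$. Each step trades a mass matrix for the transpose of the reverse operator while handing the next mass matrix down the chain, yielding
$$M_v\Pi_{u2v}=\Pi_{v2g}^T\,\Pi_{g2\tilde{u}}^T\,\Pi_{\tilde{u}2u}^T\,H_u=\bigl(\Pi_{\tilde{u}2u}\,\Pi_{g2\tilde{u}}\,\Pi_{v2g}\bigr)^TH_u=\Pi_{v2u}^T\,H_u.$$
Since $H_u=H_u^T$ this equals $(H_u\Pi_{v2u})^T$, which is the claim.

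The mechanical parts — reversing the order of the three factors under the transpose and invoking symmetry of $H_u$ and of the mass matrices — are routine. The step that needs genuine care is the bookkeeping in the second paragraph: I must check that the mass matrix produced on the right of each substitution ($M_g$, then $M_{\tilde{u}}$) is precisely the one consumed by the next norm-compatibility relation, so that the chain closes on $H_u$ rather than leaving a dangling factor. I expect the main obstacle to be the rigorous verification of the two subspace relations, where one must argue that the $L^2$ projection onto a subspace and the inclusion of that subspace are adjoint with respect to the glue-space inner product, rather than taking this for granted from the phrase ``norm compatible by construction.''
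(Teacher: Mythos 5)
Your proof is correct and is essentially the explicit version of the argument the paper leaves implicit: the paper simply asserts that the subspace projections are norm compatible by construction and that the composite relation follows, while you spell out the telescoping chain $M_v\Pi_{g2v}\to\Pi_{v2g}^TM_g$, $M_g\Pi_{\tilde{u}2g}\to\Pi_{g2\tilde{u}}^TM_{\tilde{u}}$, $M_{\tilde{u}}\Pi_{u2\tilde{u}}\to\Pi_{\tilde{u}2u}^TH_u$ and the final transpose. Your closing remark correctly identifies the only point requiring care — that the $L^2$ projection onto a subspace and its inclusion are adjoint in the glue-space inner product, which is exactly what the paper's phrase ``norm compatible by construction'' encodes.
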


Analogous to the norm compatibility, we have to pose accuracy conditions on the operators. We begin with the accuracy of the projection operators $\Pi_{u2\tilde{u}}$ and $ \Pi_{\tilde{u}2u}$. Let $\tilde{\ff}$ be the coefficients obtained when expanding $f$ along $\mathscr{T}_{\tilde{u}}$ using the basis $\{\varphi_i^{\gamma}(x)\}$ for each $\gamma\subset\mathscr{T}_ {\tilde{u}}$ and introduce 

$$\ee_{\Tilde{u}2u}=\Pi_{\tilde{u}2u}\Tilde{\ff}-\ff\qquad \ee_{u2\tilde{u}}=\Pi_{u2\tilde{u}}\ff-\tilde{\ff}$$
as projection errors arising when projecting between $\fdgrid$ and $\mathscr{T}_{\tilde{u}}$. If we use a polynomial basis, it is natural to require that the errors vanish for polynomials up to a given order $q$. The projection operator will use a centered stencil in the interior, but a one-sided stencil near the boundary. Therefore, we must also distinguish between the boundary order of accuracy $q_\partial(\Pi_{u2v})$ and the interior order $q_i(\Pi_{u2v})$. \revone{ Notably, using a method of interior order $q_i(\Pi_{u2v})$ makes the projection errors vanish for all polynomials up to order $q_i(\Pi_{u2v})-1$. Moreover, it was shown in \cite{Lundquist} that for norm compatibility to hold between two FD grids, the overall orders of accuracy on the boundary of the projection operators when using a quadrature rule of $2p$ should be related by $q_\partial(\Pi_{u2v})+q_\partial(\Pi_{v2u})\leq 2p+1$. For a deeper discussion, see \cite{Almquist2,Wang1}}.

The remaining projection operators $\Pi_{\tilde{u}2g},\Pi_{g2v},\Pi_{v2g}$ and $\Pi_{g2\tilde{u}}$ all correspond to transformations between function spaces which can be seen as subspaces of $\mathscr{F}_g$. Since the basis functions all are of the same degree, the projection orders do not affect the order of accuracy. Thereby, the order of the overall projection operators $\Pi_{u2v}$ and $\Pi_{v2u}$ is completely determined by the accuracy of $\Pi_{u2\tilde{u}}$ and $\Pi_{\tilde{u}2u}$. 

We shall follow the projection method outlined in \cite{Wang1} of projection pairs of \textit{good} and \textit{bad} operators $(\Pi^g_{u2\tilde{u}},\Pi^b_{\tilde{u}2u})$, $(\Pi^b_{u2\tilde{u}},\Pi^g_{\tilde{u}2u})$, where superscript $g$ denotes the good operator and $b$ the bad one, which also are norm compatible. The notion of good and bad relates to the orders of accuracy. From the discussion above on the bound of the order of accuracy we let the orders for the good and bad orders be given by \revone{$q_\partial(\Pi^g)=p+1, q_i(\Pi^g)=2p$  and $q_\partial(\Pi^b)=p, q_i(\Pi^b)=2p$} when using an FD method of order $2p$ and DG method with basis of order $p+1$. Since the pairs are norm compatible, it is sufficient to set up a system of equations to solve for one operator from each pair and then use norm-compatibility to construct the remaining one. This procedure yields a set of free parameters, which in \cite{Wang1} were chosen by the authors to optimize accuracy. In this work, we instead optimize over these parameters with respect to both accuracy and spectrum, demonstrating a significant improvement as compared to choosing some arbitrary parameters.

\subsection{Stability of the coupled method}
\label{sec: stability proof}
We now set to prove the stability of the FD-DG scheme. Recall that we consider the domain $\Omega=\Omega_u\cup\Omega_v$ with interface $\Gamma=\partial\Omega_u\cap\partial\Omega_v$. Moreover, we let each of the domains have constant densities $\rho_u,\rho_v$ and Lamé parameters given by $\mu_u,\lambda_u,\mu_v,\lambda_v$ and assume that they are non-negative. 

To construct operators which are of full-size, let $N_u$ and $N_v$ denote the numbers of DOFS in the FD and DG meshes respectively. Then introduce the operators $P_{u2v}^g,P_{u2v}^b$ and $P_{v2u}^b,P_{v2u}^b$ as matrices of sizes $N_v\times N_u$ and $N_u\times N_v$ with entries given the corresponding one-dimensional projection operators $\Pi^g_{u2v},\Pi^b_{u2v},\Pi^g_{v2u}$ and $\Pi^b_{v2u}$ from Section~\ref{sec: interface}, with the entries ordered consistent with the order of nodes along the interface.  Recalling the notation convention from Section \ref{subsec: SBP formulation}, we define the full-size projection operators as
\begin{equation*}
\Puvg=I_2\otimes P_{u2v}^g\quad\Puvb=I_2\otimes P_{u2v}^b\quad
\Pvug=I_2\otimes P_{v2u}^g\quad\Pvub=I_2\otimes P_{v2u}^b.\end{equation*}

Moreover, we utilize the shorthand that any quantity with a subscript $\Gamma$ denotes the degrees of freedom along the interface, so e.g., $\Bg=I_2\otimes I_x\otimes B_0$, $\uu_\Gamma=\Bg\uu$ and $\vv_\Gamma$ is the evaluation of $v$ on the DG mesh, ordered analogous to the projection operators, with any value outside $\Gamma$ set to zero. We then have the following semi-discretization for the FD side of the FD-DG scheme

\begin{align}
\begin{split}
\rho\uu_{tt}&=\LL_1\Dxx\uu+\LL_3\Dyy\uu+(\LL_2+\LL_2^T)\Dxy\uu\\
&+\HH^{-1}\bigg[-\frac{1}{2}\Bg\Hx(\Tu\uu+\Puvb\Tv\vv)+\frac{1}{2}\Tu^T\Hx(\uu_\Gamma-\Puvg\vv_\Gamma)\bigg]\\
&-\HH^{-1}\bigg[\chi_1\frac{\alpha_v}{h_v}\Bg\Hx\Puvb(\Pvug\uu_\Gamma-\vv_\Gamma)+\chi_2\frac{\beta_u}{h_u}\Bg\Hx(\uu_\Gamma-\Puvg\vv_\Gamma)\bigg].\\
\label{eq:FDdiscretization}
\end{split}
\end{align}

Here the first row corresponds to the bulk and mimics that of \eqref{eq:FDdiscretization}. The remaining terms are SATs enforcing the continuity conditions \eqref{eqn: contcond}-\eqref{eqn: traccond}. We introduce $\Tu=-\mathcal{B}_\Gamma\mathcal{T}_y$ and $\Tv$ as the full-size operator containing the traction ordered along $\Gamma$ on the FD and DG meshes respectively. Moreover, $\alpha_u$ and $\beta_u$ represent dimensionless stability parameters that must be determined, while $\chi_1$ and $\chi_2$ are constants with dimensions of pressure. To maintain dimensional consistency, the two last terms must also be scaled by the mesh sizes $h_u$ and $h_v$, which, as mentioned earlier, may not match along the interface. Note that here $h_u$ denotes the size of the FD mesh in the $x$-direction and $h_v$ the smallest element size in $\triangulation$.

We now turn to the DG semi-discretization and take inspiration of the form \eqref{eq:dgformulation}. However, we only consider the contribution to the fluxes of the edges in the set $\edges^\Gamma$, consisting of edges having non-zero intersection with $\Gamma$. This is possible since the internal fluxes combine with the bulk terms analogously as in the DG case, and hence can be neglected. Moreover, for notational convenience, we rewrite the bulk in terms of the strain tensor $\varepsilon(\vv)=\frac{1}{2}(\nabla\vv+\nabla\vv^T)$ and let $(\cdot,\cdot)_{\Omega_v}$ denote the standard $L^2(\Omega_v)$ inner product. We then get the coupled semi-discretization as: find $\vv\in\testspace\times\testspace$ such that for each time $t\in(0,T]$ it holds

\begin{align}
(\rho_v\vv_{tt},\pphi)_{\Omega_v}=&-\lambda_v(\nabla\cdot\vv,\nabla\cdot \pphi)_{\Omega_v}-2\mu_v(\varepsilon(\vv),\varepsilon(\pphi))_{\Omega_v}\nonumber\\
&+\frac{1}{2}\sum_{\gamma\in\edges^\Gamma}(\vv_\Gamma-\uu^\gamma_\Gamma,\ttau^\gamma(\pphi))_\gamma
+\frac{1}{2}\sum_{\gamma\in\edges^\Gamma}(\ttau^\gamma(\vv)-\ttau^\gamma(\uu),\pphi)_\gamma\nonumber\\
&-\chi_1\frac{\alpha_v}{h_v}\sum_{\gamma\in\edges^\Gamma}\big(\vv_\Gamma-\uu_\Gamma^\gamma,\pphi\big)_\gamma-\chi_2\frac{\beta_v}{h_u}\sum_{\gamma\in\edges^\Gamma}\big(\vv_\Gamma^\flat-{\Tilde{\uu}}^\gamma_\Gamma,\pphi\big)_\gamma
\label{eq:dGdiscretization}
\end{align}
for every $\pphi\in\testspace\times\testspace$. Here we have introduced the notation $\vv_\Gamma^\flat=\sum_{i=0}^q(\Puvb\Pvug\vv_{\Gamma})^\gamma_i\pphi_i^\gamma$, and $\uu_\Gamma^\gamma=\sum_{i=0}^q(\Puvg\uu_\Gamma)^\gamma_i\pphi_i^\gamma$ for the component of $\uu_\Gamma$ projected from $\fdgrid$ onto $\gamma\in\edges^\Gamma$, where $\{\pphi^\gamma_i(x)\}_{i=0}^q$ again denotes the local nodal basis on the edge $\gamma$. Analogously we introduce the projected quantities $\ttau^\gamma(\uu)=\sum_{i=0}^q(\Puvb\Tu\uu)_i^\gamma\pphi^\gamma_i$ and $\Tilde{\uu}^\gamma_\Gamma=\sum_{i=0}^q(\Puvb\uu_\Gamma)^\gamma_i\pphi_i^\gamma$, where $\ttau^\gamma(\uu)$ corresponds to the FD traction projected onto the edge $\gamma$ and $\Tilde{\uu}_\Gamma$ is a projection onto $\gamma$ by the bad projection operator. We recall that the nodes are ordered in a way consistent with the order of $\fdgrid$. With this in place. we can set to prove the main theoretical contribution of the paper.
\begin{theorem}
    Let $\chi_1=2\mu_v+\lambda_v$ and $\chi_2=2\mu_u+\lambda_u$, where the Lamé parameters are positive. Assume that the operators $\Dy$, $\Dyy$ are fully compatible and the projection operators between the FD and DG mesh are norm compatible, then the semi-discretizations \eqref{eq:FDformulation} and \eqref{eq:dGdiscretization} form an energy conserving scheme  under the choices $\alpha_u=\alpha_v\geq\frac{1}{4C}$ and $\beta_u=\beta_v\geq\frac{1}{4C'}$ for some constants $C,C'>0$, with $C$ depending on the quality of $\triangulation$ and $C'$ on the order of the SBP operators used.
\end{theorem}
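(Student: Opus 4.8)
The plan is to build a discrete energy $E(t)$ from the kinetic and elastic contributions of both subdomains together with quadratic interface penalty terms, and then to prove two things \emph{separately}: that $E$ is exactly conserved, $\tfrac{d}{dt}E=0$, and that $E$ is nonnegative. The key structural observation is that conservation holds for \emph{every} matched choice $\alpha_u=\alpha_v$, $\beta_u=\beta_v$ of penalties, regardless of their magnitude; the lower bounds $\alpha_v\geq\tfrac{1}{4C}$ and $\beta_u\geq\tfrac{1}{4C'}$ enter only to guarantee that $E\geq 0$, so that the conserved quantity is a genuine seminorm and the conservation statement is meaningful. Here $C$ is exactly the constant of Lemma~\ref{lemma:inverseestimate}, depending on the quality of $\triangulation$, while $C'$ will come from a discrete FD trace estimate and depends on the SBP order.

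I would first dispose of the bulk terms. Testing \eqref{eq:FDdiscretization} with $\uu_t^T\HH$ turns the left side into $\tfrac{d}{dt}\big(\tfrac{\rho_u}{2}\,\uu_t^T\HH\uu_t\big)$. For the FD elastic part I would use full compatibility of $\Dy,\Dyy$ (and the matching form for $\Dx,\Dxx$), writing $\Dyy=\HH^{-1}(-\Dy^T\HH\Dy-R+\Bg\Dy)$ with $R\succeq 0$; since each $\LL_1,\LL_2,\LL_3$ commutes with the tensor-product operators, the bulk contribution becomes the exact time derivative of a quadratic elastic energy of the form $\tfrac12(\Dx\uu)^T\LL_1\HH(\Dx\uu)+\cdots+\tfrac12\uu^T\mathcal{R}\uu$ with a symmetric positive semidefinite remainder $\mathcal{R}$, plus interface residuals supported on $\Gamma$. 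On the DG side, choosing $\pphi=\vv_t$ in \eqref{eq:dGdiscretization} turns its bulk bilinear form into $\tfrac{d}{dt}\big[\tfrac12\lambda_v\|\nabla\cdot\vv\|_{\Omega_v}^2+\mu_v\|\varepsilon(\vv)\|_{\Omega_v}^2\big]$, leaving only the flux terms over $\edges^\Gamma$.

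Next I would collect every surviving interface contribution---the SBP boundary residuals of the FD bulk, the FD SAT terms in \eqref{eq:FDdiscretization}, and the DG flux terms on $\edges^\Gamma$---and show they cancel. The decisive tool is norm compatibility of the good/bad pairs, $M_v\Puvg=(H_u\Pvug)^T$ and $M_v\Puvb=(H_u\Pvub)^T$ (the corollary after Definition~\ref{def:normcompatibility}), which lets me transpose a projected FD quantity tested against a DG velocity into an FD quantity tested against the projected DG velocity, and conversely. With $\chi_1=2\mu_v+\lambda_v$, $\chi_2=2\mu_u+\lambda_u$, and the matched penalties, each traction-transfer term on the FD side pairs exactly with its DG counterpart, and the time derivative of every quadratic penalty term in $E$ is reproduced precisely by the symmetric SAT and flux terms; summing the FD and DG rates then gives $\tfrac{d}{dt}E=0$ identically. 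Consistency of the good/bad labelling---so that the correct adjoint identity applies term by term---is the delicate but mechanical part of this step.

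The substantive work is positivity of $E$. After grouping the interface terms, $E$ carries indefinite symmetric-penalty cross terms coupling the boundary tractions to the displacement jumps, exactly as in the IPDG energy analysis, which I would control by completing the square. On the DG side I bound the elastic energy below using Lemma~\ref{lemma:contractionestimate} by $\tfrac{1}{4\mu_v+2\lambda_v}\|\sigma(\vv)\|_{\Omega_v}^2$ and then invoke the inverse trace inequality $\|\sigma(\vv)\|_{\Omega_v}^2\geq Ch_v\|\sigma(\vv)\cdot\nvec\|_{L^2(\Gamma)}^2$ of Lemma~\ref{lemma:inverseestimate} to dominate the boundary traction by the bulk energy; matching this against the $\tfrac{\chi_1\alpha_v}{h_v}$ penalty and completing the square yields precisely $\alpha_v\geq\tfrac{1}{4C}$. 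The FD half is the main obstacle: it requires establishing the analogous discrete estimate bounding the boundary traction $\Bg\Tu\uu$ in the $\HH$-norm by the fully-compatible FD elastic energy, with a constant $C'$ uniform in $h_u$ and governed by the SBP order, after which the same Young-type completion against the $\tfrac{\chi_2\beta_u}{h_u}$ penalty gives $\beta_u\geq\tfrac{1}{4C'}$. Combining nonnegativity of all pieces---the two kinetic terms and the elastic-plus-penalty forms---shows $E\geq0$, and together with $\tfrac{d}{dt}E=0$ this proves the scheme is energy conserving.
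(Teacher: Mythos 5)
Your overall strategy is essentially the paper's own proof: test \eqref{eq:FDdiscretization} with $\uu_t^T\HH$ and \eqref{eq:dGdiscretization} with $\pphi=\vv_t$, use full compatibility to turn the FD bulk into a total time derivative plus interface residuals, use norm compatibility to combine the FD and DG interface terms into total derivatives of quadratic and cross terms, and invoke the penalty bounds only to make the stored energy nonnegative (on the DG side via Lemma~\ref{lemma:contractionestimate}, Lemma~\ref{lemma:inverseestimate} and completing the square, exactly as in \eqref{eq:E1}--\eqref{eq:E1FinalForm}). Your observation that conservation holds for any matched penalties, the bounds entering only through positivity, is correct and matches the paper. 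However, there is a genuine error in the load-bearing step: you state the adjoint relations as $M_v\Puvg=(H_u\Pvug)^T$ and $M_v\Puvb=(H_u\Pvub)^T$, pairing good with good and bad with bad. The norm-compatible pairs are good-with-bad: the paper's relation \eqref{eq:normcompatibility} is $\MM\Puvg=(\Hx\Pvub)^T$ and $\MM\Puvb=(\Hx\Pvug)^T$, consistent with the construction of the pairs $(\Pi^g_{u2\tilde u},\Pi^b_{\tilde u2u})$, $(\Pi^b_{u2\tilde u},\Pi^g_{\tilde u2u})$ in Section~\ref{sec: interface}; indeed the accuracy constraint $q_\partial(\Pi_{u2v})+q_\partial(\Pi_{v2u})\le 2p+1$ makes a norm-compatible good-good pair impossible. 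This is not a cosmetic slip: the "mechanical" pairing you describe fails under your relations. For instance, the DG cross term $-\tfrac12(\Puvb\Tu\uu)^T\MM\vv_{\Gamma,t}$ must transpose, via $\MM\Puvb=(\Hx\Pvug)^T$, into $-\tfrac12(\Tu\uu)^T\Hx\Pvug\vv_{\Gamma,t}$, which then combines with the FD cross term $-\tfrac12(\Tu\uu_t)^T\Hx\Pvug\vv_\Gamma$ into the total derivative $-\tfrac12\tfrac{d}{dt}\big[(\Tu\uu)^T\Hx\Pvug\vv_\Gamma\big]$. With your bad-bad relation you would instead obtain $-\tfrac12(\Tu\uu)^T\Hx\Pvub\vv_{\Gamma,t}$, which has no partner on the FD side, and $\tfrac{d}{dt}E=0$ is lost.

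The second weakness is the FD half of positivity, which you correctly flag as the main obstacle but then only assert. The difficulty is that the interface traction $\Tu\uu$ mixes $\Dx\uu$ and $\Dy\uu$ through $\LL_2,\LL_3$, so no scalar trace-plus-Young argument gives the required bound; a matrix argument is needed. The paper's proof uses the SBP borrowing estimate \eqref{eq:SBPestimate}, $\uu^T\HH\uu\geq C'h_u\,\uu^T\Hx\uu$ with $C'$ determined by the boundary weights of $H$ (Table~\ref{table:coefficients}), writes the residual quadratic form as the block system \eqref{eq:u2energy} in the variables $[\Dx\uu,\Dy\uu]$, and verifies positive semi-definiteness via Schur complements in \ref{section:Appendix}; it is only there that the specific choice $\chi_2=2\mu_u+\lambda_u$ and the bound $\beta_u\ge\tfrac{1}{4C'}$ are actually forced. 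Until that computation (or an equivalent one) is supplied, your proof of $E\ge 0$ on the FD side is incomplete.
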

\begin{proof}
    We begin by considering the discretization \eqref{eq:dGdiscretization}. In order to write the semi-dicretization on a form compatible with \eqref{eq:FDdiscretization} we need to introduce the full-size one-dimensional mass matrix $M_\Gamma$ along the interface and $\MM=I_2\otimes M_\Gamma$ such that $(\pphi_i,\pphi_j)_{\Gamma}=\pphi_i^T\MM\pphi_j$ for $\pphi_i,\pphi_j\in\testspace\times\testspace$. We also need an operator $\Tv$ for the DG discretization such that for a given edge $\gamma\in\edges$ we can represent the traction as $\ttau^\gamma(\vv)=\sum_{i=0}^q(\Tv\vv)^\gamma_i\vvarphi_i$. Then, letting $\pphi=\vv_t$ we can write the semi-discretization as
    \begin{align*}
       \frac{1}{2}\frac{d}{dt}(\rho_v\vv_t,\vv_t)_{\Omega_v}=&-\frac{1}{2}\frac{d}{dt}(2\mu\|\varepsilon(\vv)\|^2_{\Omega_v}+\lambda_v\|\nabla\cdot\vv\|^2_{\Omega_v})\nonumber\\
       &+\frac{1}{2}(\vv_\Gamma-\Puvg\uu_\Gamma)^T\MM\Tv\vv_t+\frac{1}{2}(\Tv\vv-\Puvb\Tu\uu)^T\MM\vv_{\Gamma,t}\nonumber\\
       &-\chi_1\frac{\alpha_v}{h_v}(\vv_\Gamma-\Puvg\uu_\Gamma)^T\MM\vv_{\Gamma,t}\\
       &-\chi_2\frac{\beta_v}{h_u}(\Puvb\Pvug\vv_\Gamma-\Puvb\uu_\Gamma)^T\MM\vv_{\Gamma,t}.
    \end{align*}
    In order to proceed, recall the notion of norm-compatibility, relating the projection operators by 
   \begin{equation}
    \MM\Puvg=(\Hx\Pvub)^T\quad\text{and}\quad \MM\Puvb=(\Hx\Pvug)^T.
    \label{eq:normcompatibility}
    \end{equation}
    This allows us to rewrite the bad projection operators in terms of the good ones. \revone{ Gathering terms only containing $\vv$ by themselves and terms with both $\uu,\vv$ by themselves yields}
    \begin{align}
         \frac{1}{2}\frac{d}{dt}(\rho_v\vv_t,\vv_t)_{\Omega_v}&=-\frac{1}{2}\frac{d}{dt}\big[-\vv_\Gamma^T\MM\Tv\vv+\chi_1\frac{\alpha_v}{h_v}\vv_\Gamma^T\MM\vv_\Gamma\nonumber\\
         &+\chi_2\frac{\beta_v}{h_u}(\Pvug\vv_\Gamma)^T\Hx\Pvug\vv_\Gamma\big]\nonumber
         -\frac{1}{2}(\Puvg\uu_\Gamma)^T\MM\Tv\vv_t\nonumber\\
         &-\frac{1}{2}(\Tu\uu)^T\Hx\Pvug\vv_{\Gamma,t}+\chi_1\frac{\alpha_v}{h_v}(\Puvg\uu_\Gamma)^T\HH\vv_{\Gamma,t}\nonumber\\
         &+\chi_2\frac{\beta_v}{h_u}\uu_\Gamma^T\MM\Pvug\vv_{\Gamma,t}.
         \label{eq:secondstepdG}
    \end{align}
    We now turn to the FD discretization given by \eqref{eq:FDdiscretization}. Mimicking the standard integration by parts procedure, multiplying with $\uu_t^T\HH$ we get
    \begin{align}
        \rho_u\uu_t^T\HH\uu_{tt}&=\uu_t^T\HH\LL_1\Dxx\uu+\uu_t^T\LL_3\HH\Dyy\uu+\uu_t^T\HH(\LL_2+\LL_2^T)\HH\Dxy\uu\nonumber\\
        &-\frac{1}{2}\uu_{\Gamma,t}^T\Bg\Hx(\Tu\uu+\Puvb\Tv\vv)+\frac{1}{2}(\Tu\uu_t)^T\Hx(\uu_\Gamma-\Puvg\vv_\Gamma)\nonumber\\
        &-\chi_1\frac{\alpha_u}{h_v}\uu_{\Gamma,t}^T\Hx\Puvb(\Pvug\uu_\Gamma-\vv_\Gamma)-\chi_2\frac{\beta_u}{h_u}\uu_{\Gamma,t}^T\Hx(\uu_\Gamma-\Puvg\vv_\Gamma).
        \label{eq:firststepFD}
    \end{align}
    Before proceeding, we can note that it is possible to rewrite the first row above using the SBP property together with the assumption that the operators $\Dy,\Dyy$ are fully compatible. This yields
    \begin{align}
        \uu_t^T\HH\LL_1\Dxx\uu&+\uu_t^T\LL_3\HH\Dyy\uu+\uu_t^T\HH(\LL_2+\LL_2^T)\HH\Dxy\uu=-\uu_t^T\LL_1\Hy\Ax\nonumber\\
        &-\uu_t^T\LL_3\Hx\Ay-\uu_t^T\Hx\Bg\Dy\uu+\uu_t^T(\LL_2+\LL_2^T)\HH\Dxy\uu\nonumber\\
        &=-\uu_t^T\LL_1\Hy\Ax\uu-\uu_t^T\LL_3\Hx\Ay\uu-\uu_t^T\Hx\Bg\Dy\uu\nonumber\\
        &-\uu_t^T\LL_2^T(\Bg+(\Hy\Dy)^T)\Hx\Dx-\uu_t^T\Dx^T\HH\Dy\uu=(\star)
        \label{eq:rewritingFD}
    \end{align}
    where we also have neglected the terms not present on the boundary, e.g. the $\mathcal{B}_x$ term. In order to obtain a symmetric term which will be useful in the energy analysis we then gather terms and use compatibility to rewrite one step further, yielding
    \begin{align}
        (\star)&= -\uu_t^T\Hx\Tu\uu-\uu_t^T\big(\LL_2\Dx^T\HH\Dy+\LL_2^T\Dy^T\HH\Dx+\LL_1\Dx^T\HH\Dx\\\nonumber
        &+\LL_3\Dy^T\HH\Dy\big)\uu=-\uu_t^T\Hx\Tu\uu-\uu_t^T\UU\uu,
    \end{align}
    where we for notational convenience introduce the matrix 
    
    $$\UU=\LL_2\Dx^T\HH\Dy+\LL_2^T\Dy^T\HH\Dx+\LL_1\Dx^T\HH\Dx+\LL_3\Dy^T\HH\Dy$$, which by the virtue of $\LL_1,\LL_2$ and $\LL_3$ being block diagonal satisfies $\UU=\UU^T$ . Combining this with the last two lines in \eqref{eq:firststepFD} we can once more gather $\uu$ and $\vv$ terms as well as writing everything in terms of good projection operators by using \eqref{eq:normcompatibility}, resulting in
    \begin{align}
        \frac{1}{2}\rho_u\uu_t^T\HH\uu_t=&-\frac{1}{2}\big[\uu^T\Hx\Tu\uu+\uu^T\UU\uu+\chi_1\frac{\alpha_u}{h_v}(\Puvg\uu_\Gamma)^T\MM\Puvg\uu_\Gamma\nonumber\\
        &+\chi_2\frac{\beta_u}{h_u}\uu_\Gamma^T\Hx\uu_\Gamma\big]-\frac{1}{2}(\Puvg\uu_{\Gamma,t})^T\MM\Tv\vv-\frac{1}{2}(\Tu\uu_t)^T\Hx\Pvug\vv_\Gamma\nonumber\\
        &+\chi_1\frac{\alpha_u}{h_v}(\Puvg\uu_{\Gamma,t})^T\MM\vv_\Gamma+\chi_2\frac{\beta_u}{h_u}\uu_{\Gamma,t}^T\Hx\Pvug\vv_\Gamma
        \label{eq:secondstepFD}.
    \end{align}
To prove stability it now remains to show that the combination of \eqref{eq:secondstepFD} and \eqref{eq:secondstepdG} results in a non-positive quantity. Upon comparing the two equations it is immediate that by setting $\alpha_u=\alpha_v$ and $\beta_v=\beta_u$ we can add the two equations to get

\begin{equation}
    \frac{1}{2}\frac{dE}{dt}=-\frac{1}{2}\frac{d}{dt}\big(U_1+U_2\big),
\end{equation}
where we have introduced \revone{ the quantities $E=\rho_u\uu_t^T\HH\uu_t+(\rho_v\vv_t,\vv_t)_{\Omega_v}$},
\begin{align*}
    U_1=2\mu_v\|\varepsilon(\vv)\|^2_{\Omega_v}&+\lambda_v\|\nabla\cdot\vv\|^2_{\Omega_v}+\chi_1\frac{\alpha_v}{h_v}(\Puvg\uu_\Gamma)^T\MM\vv_\Gamma\\
    &+\chi_1\frac{\alpha_v}{h_v}(\Puvg\uu_\Gamma)^T\MM\Puvg\uu_\Gamma
    -2\chi_1\frac{\alpha_v}{h_v}(\Puvg\uu_\Gamma)^T\MM\vv_\Gamma\\
    &-\vv_\Gamma\MM\Tv\vv+(\Puvg\uu_\Gamma)^T\MM\Tv\vv    
\end{align*}
and
\begin{align*}
    U_2=\uu^T\UU\uu+\chi_2\frac{\beta_u}{h_u}\uu_\Gamma^T\HH_x\uu_\Gamma&+\chi_2\frac{\beta_u}{h_u}\uu_\Gamma^T\HH_x\Pvug\vv_\Gamma-\chi_2\frac{2\beta_u}{h_u}\uu_\Gamma^T\HH_x\Pvug\vv_\Gamma\\
&-\uu_\Gamma^T\Hx\Tu\uu+(\Tu\uu)^T\Hx\Pvug\vv_\Gamma.
\end{align*}
\revone{If we then can show that $U_1,U_2\geq 0$ it follows that $\tilde{E}=E+U_1+U_2$ corresponds to a semi-discrete energy which is conserved}. We set to show that $U_1\geq 0$. By completing the square we can rewrite it as 
\begin{align}
    U_ 1&=2\mu_v\|\varepsilon(\vv)\|^2_{\Omega_v}+\lambda_v\|\nabla\cdot\vv\|^2_{\Omega_v}-\frac{h_v}{4\alpha_v\chi_1}(\Tv\vv)^T\MM\Tv\vv\nonumber\\
    &+\chi_1\frac{\alpha_v}{h_v}\big(\vv_\Gamma-\Puvg\uu_\Gamma+\frac{h_v}{2\alpha_v\chi_1}\Tv\vv\big)^T\MM\big(\vv_\Gamma-\Puvg\uu_\Gamma+\frac{h_v}{2\alpha_v\chi_1}\Tv\vv\big).  
    \label{eq:E1}
\end{align}
Note that the second row is non-negative under the choice of $\chi_1$ and $\alpha_v$ positive, hence it is sufficient to consider the first row. This will follow from Lemma~\ref{lemma:contractionestimate} and Lemma~\ref{lemma:inverseestimate} since $2\mu_v\|\varepsilon(\vv)\|^2_{\Omega_v}+\lambda_v\|\nabla\cdot\vv\|^2_{\Omega_v}=\int_{\Omega_v}\sigma(\vv):\varepsilon(\vv)dA$, yielding
\begin{align}
 U_1&\geq h_v\bigg(\frac{C}{2\mu_v+\lambda_v}-\frac{1}{4\alpha_v\chi_1}\bigg)(\Tv\vv)^T\MM\Tv\vv\\
 &+\chi_1\frac{\alpha_v}{h_v}\big(\vv_\Gamma-\Puvg\uu_\Gamma+\frac{h_v}{2\alpha_v\chi_1}\Tv\vv\big)^T\MM\big(\vv_\Gamma-\Puvg\uu_\Gamma+\frac{h_v}{2\alpha_v\chi_1}\Tv\vv\big),    
 \label{eq:E1FinalForm}
\end{align}
where we have used that $\|\sigma(\uu)\cdot\nvec\|_{\Gamma}^2=(\Tv\vv)^T\MM\Tv\vv$. Now by setting $\chi_1=2\mu_v+\lambda_v$ we get that the expression for $U_1$ is non-negative if we assume that $\alpha_v\geq\frac{1}{4C}$, as desired. Finally we turn to $U_2$. Here it is also possible to rewrite the expression by means of completing a square to obtain
\begin{align}
    U_2&=\uu^T\UU\uu-\frac{h_u}{4\beta_u\chi_2}(\Tu\uu)^T\Hx\Tu\uu\nonumber\\
    &+\chi_2\frac{\beta_u}{h_u}\big(\uu_\Gamma-\Pvug\vv_\Gamma+\frac{h_u}{2\beta_u}\Tu\uu\big)^T\HH_x\big(\uu_\Gamma-\Pvug\vv_\Gamma+\frac{h_u}{2\beta_u}\Tu\uu\big),
    \label{eq:U2}
\end{align}
where it again is sufficient to bound the terms in the first row of $U_2$. \revone{A key step to obtain a bound is the positivity property from section 4.2 in \cite{Almquist1} for the SBP norm, which is an analogue to the trace inequality Lemma~\ref{lemma:inverseestimate} used on the DG side. In our case, the estimate is given by}
\begin{equation}
\uu^T\HH\uu\geq C'h_u\uu^T\Hx\uu,
\label{eq:SBPestimate}
\end{equation}
where $C'>0$ depends on the value of the quadrature matrix $H$ at the boundary and is given for SBP operators of order $2p=2,4,6$ in Table~\ref{table:coefficients}. 

\begin{table}
\centering
\begin{tabular}{|c|c|}
\hline
    Order $(2p)$ & $C'$  \\
    \hline
     2 &  0.5 \\
     \hline
     4 & 0.3542\\
     \hline
     6 & 0.3159\\
     \hline
    
     \end{tabular}  
     \caption{Value of the constant $C'$ used in the estimate \eqref{eq:SBPestimate} for SBP operators of order 2,4 and 6.}
       \label{table:coefficients}
\end{table}

\revone{To prove that $U_2\geq 0$ we follow the stability proof for the fully compatible case performed in \cite{Duru1}; rewriting the top row of $U_2$ as a large system on the form $[\Dx\uu,\Dy\uu]^T\mathcal{Q}[\Dx\uu,\Dy\uu]$ and show that $\mathcal{Q}$ is positive semi-definite. After some algebra, we obtain the first row of $\eqref{eq:U2}$ as}

\begin{equation}
    h_u\begin{bmatrix}
            \Dx\uu \\
            \Dy\uu
        \end{bmatrix}^T\begin{bmatrix}
(C'\LL_1-\frac{1}{4\beta_u\chi_2}\LL_2\LL_2^T)\HH_x & (C'\LL_2-\frac{1}{4\beta_u\chi_2}\LL_2\LL_3)\HH_x \\
(C'\LL_2^T-\frac{1}{4\beta_u\chi_2}\LL_2^T\LL_3^T)\HH_x & (C'\LL_3-\frac{1}{4\beta_u\chi_2}\LL_3\LL_3^T)\HH_x
\end{bmatrix}\begin{bmatrix}
            \Dx\uu \\
            \Dy\uu
        \end{bmatrix}
        \label{eq:u2energy}.
\end{equation}
In order to prove that this expression is non-negative, we use the method of Schur complements. From \cite{Zhang1} we recall that the compound matrix $\ZZ=[\ZZ_{11}, \ZZ_{12};\ZZ_{12}^T,\ZZ_{22}]$ is positive semi-definite if it holds that
\\
\begin{enumerate}
    \item the matrix $\ZZ_{22}$ is invertible,
    \item the matrix $\ZZ_{22}$ is positive semi-definite,
    \item the complement $\ZZ/\ZZ_{22}=\ZZ_{11}-\ZZ_{12}^T\ZZ_{22}^{-1}\ZZ_{12}$ is positive semi-definite.\\
\end{enumerate}

It is proven in \ref{section:Appendix} that the conditions above are met for the choice $\chi_2=2\mu_u+\lambda_u$ if the penalty parameter $\beta_v$ satisfies the bound $\beta_u>\frac{1}{4C'}$, as desired. Hence we have proven that \eqref{eq:u2energy} is non-negative and therefore we can conclude that $U_2\geq 0$ under the assumption $\beta_u\geq\frac{1}{4C'}$ which together with \eqref{eq:E1FinalForm} being non-negative if $\alpha_v\geq\frac{1}{4C}$ proves that
\revone{$$\frac{1}{2}\frac{d}{dt}(\rho_u\uu_t^T\HH\uu_t+(\rho_v\vv_t,\vv_t)_{\Omega_v}+U_1+U_2)= 0,$$}
under the desired bounds for $\alpha_u$, $\beta_v$ and thereby also the theorem.
\end{proof}
\begin{remark}
    It should be noted that the proof will fail at \eqref{eq:rewritingFD} if we remove the assumption of full compatibility as we cannot obtain a total time derivative. This assumption can be relaxed for the wave equation \cite{Almquist2020}. 
\end{remark}


\section{Numerical results}
\label{sec: numerical results}
In this section, we present numerical examples to demonstrate accuracy and stability properties of the FD-DG scheme developed for the elastic wave equation.\footnote{The MATLAB codes used to perform the experiments are available at \url{https://github.com/AndreasGranath/Elastic-Waves-FD-DG}}. First, we examine how the selection of free parameters influences the scheme's stability and time-step restriction. Then, we conduct a series of benchmarking examples to demonstrate the stability and convergence properties of the developed method. This is demonstrated in three scenarios: first, utilizing a manufactured solution in a homogeneous medium with straight boundary and interface; then, introducing a curved boundary within the same medium; and lastly, examining interface waves in a layered medium with discontinuous material property. We end the section with a demonstration of stability by considering a domain with intricate internal geometry, which would be \revone{ cumbersome} to resolve using a curvilinear multiblock FD method. For time stepping, we employ a fourth order accurate modified equation approach, following Gilbert and Joly \cite{Gilbert}.

We discretize in space using the 4th and 6th order SBP-FD operators constructed by Mattsson \cite{Mattsson1}. We view the results in \cite{Almquist2}, where an experimental rate of convergence (EOC) of $4$ was obtained for the $4$th order SBP method and $5$ for a $6$th order method as optimal. This motivates that we use polynomial spaces of order $3$ and $4$ for the DG discretization, respectively. In convergence studies, we choose a time step small enough so that the error in the solution is dominated by the spatial discretization.


\subsection{Spectrum analysis}
 Let the vector $\xx_u$ contain the coordinates of the FD grid on $\Gamma$, and $\xx_{\tilde{u}}$ contain the coordinates of the nodes of the unstructured grid on $\Gamma$. Then, the good projection operators $\Pi^g_{u2\tilde{u}},\Pi^g_{\tilde{u}2u}$  between $\xx_u$ and $\xx_{\tilde{u}}$ are constructed by imposing the accuracy conditions
$$\Pi_{u2\tilde{u}}^g\xx_{u}^k-\xx_{\tilde{u}}^k=\pmb{0},$$
$$\Pi_{\tilde{u}2u}^g\xx_{\tilde{u}}^k-\xx_{u}^k=\pmb{0},$$
 where $\Pi^g_{\tilde{u}2u},\Pi^g_{u2\tilde{u}}$ are assumed to consist of two blocks corresponding to the closure and then a repeated stencil in the interior. Moreover, the power $k$ runs over $k=0,..,q_I$   in the interior of the operator and $k=0,..,q_\partial$ on the closure. Analogous equations are then obtained for $\Pi^b_{u2\tilde{u}}$ and $\Pi^b_{\tilde{u}2u}$ in terms of the good operators using the norm compatibility condition from Definition ~\ref{def:normcompatibility}. The system is subsequently solved using MATLAB's symbolic toolbox, yielding a set of free parameters.
 
 In a setting with exact projection operators, the products $\Pi^g_{\tilde{u}2u}\Pi^b_{u2\tilde{u}}$ and $\Pi^b_{u2\tilde{u}}\Pi^g_{\tilde{u}2u}$ would be identity matrices.  In the work of Kozdon and Wilcox \cite{Kozdon}, the free parameters are utilized such that  the product $\Pi^g_{u2\Tilde{u}}\Pi^b_{\Tilde{u}2u}$  approximates the identity matrix. They achieve this by considering the set $\{\epsilon_i\}_{i=1}^N$ of the sorted eigenvalues belonging to $\Pi^g_{u2\Tilde{u}}\Pi^b_{\Tilde{u}2u}$, and then minimizing the quantity:

\begin{equation}
   \|\Pi^g_{u2\Tilde{u}}\Pi^b_{\Tilde{u}2u}\|_E= \sqrt{\sum_{i=1}^{N-1}(\epsilon_{i+1}-\epsilon_i)^2}.
    \label{eq:kozdonquant}
\end{equation}
We shall expand on this by comparing the function above to another intuitive measure, the \revone{Frobenius} norm of $\Pi^g_{u2\Tilde{u}}\Pi^b_{\Tilde{u}2u}$ , given by

\begin{equation}
\|\Pi^g_{u2\Tilde{u}}\Pi^b_{\Tilde{u}2u}\|_F=\sqrt{\text{Tr}\big((\Pi^g_{u2\Tilde{u}}\Pi^b_{\Tilde{u}2u})^T\Pi^g_{u2\Tilde{u}}\Pi^b_{\Tilde{u}2u}\big)},
\label{eq:FrobNorm}
\end{equation}
as we want the spectrum to be as close to the identity matrix as possible. To optimize accuracy, we minimize the discrete $l^2$-errors to leading order for $\Pi^g_{u2\tilde{u}}$ and $\Pi^b_{\tilde{u}2u}$ respectively. Initially, we minimize the spectrum, and then utilize the resulting value of the parameters as an initial step for subsequent accuracy minimization. However, it is important to note that the order in which we perform these steps is arbitrary. Therefore, it is worthwhile to explore the impact this sequencing has on the overall discretization. We remark that the optimization of free parameters in high order SBP operators was considered in \cite{ErikssonWang}.


To perform the minimization, we use built-in algorithms in MATLAB. We use two derivative free minimization techniques; \textit{fminunc} and \textit{fminsearch} to minimize the quantities \eqref{eq:kozdonquant}-\eqref{eq:FrobNorm}. To investigate the effect of the projection on the overall discretization, we consider the domain $[0,1]^2$ with traction-free boundary conditions, material properties $\mu_{u,v}=1$, $\lambda_{u,v}=1$, $\rho_{u,v}=1$ and an interface at $y_\Gamma=1/5$. The top block is discretized using a fourth order accurate SBP-FD method with $h=1/20$ and the bottom block is discretized by the IPDG method based on cubic basis functions. The triangulation in the bottom block is constructed such that the triangle nodes on the interface match the grid points from the top block. Then, we assemble the bulk matrix of the semidiscretization with parameters   \revthree{$\alpha_u=\beta_u=60$} using the various projection operators obtained from the different measures and minimization techniques. The largest absolute eigenvalues of the bulk matrix,  rescaled by $h^2$, are presented in Table~\ref{table:eigenvals}. The first thing to note is that when the unoptimized projection operators are used, the system becomes very stiff and the CFL condition will render the method pratically unusable, motivating the need of minimization. But then it should be noted that for all different strategies used, we obtain eigenvalues with the same order of magnitude, with the largest discrepancy being when first optimizing spectrum followed by accuracy. In this case, it is clear that the largest difference is roughly a factor of four, which is not significant. We can therefore conclude that all choices above are suitable, but we shall resort to use the operators which resulted in the smallest absolute eigenvalue of $6.104\times 10^3$. 



\begin{table}[h!]
    \centering
    \begin{tabular}{|c|c|c|c|c|}
    \hline
    &\multicolumn{2}{|c|}{Accuracy-Spectrum} & \multicolumn{2}{|c|}{Spectrum-Accuracy}\\
    \hline
        Norm & Fminunc & Fminsearch& Fminunc& Fminsearch \\
        \hline
        $\|P_{u2\Tilde{u}}P_{\Tilde{u}2u}\|_ E$ & $ 6.158\times10^3$& $6.104\times 10^3$ & $2.246\times 10^4$ &$ 6.147\times 10^3 $\\
         $\|P_{u2\Tilde{u}}P_{\Tilde{u}2u}\|_ F$ & $6.119\times 10^3$ & $6.254\times 10^3 $& $6.281\times 10^3$ & $9.380\times 10^3$\\
         \hline
         Unoptimized & \multicolumn{4}{|c|}{$1.783\times 10^8$} \\
         \hline
    \end{tabular}
    \caption{The magnitude of the largest eigenvalues of the bulk matrix for the semidiscretization rescaled by $h^2$ depending on the norms, order in which we optimize and the method used.}
    \label{table:eigenvals}
\end{table}

The Cartesian grid in the finite difference method is constructed independently from the unstructured grid of the DG method. Thus, on the interface, a grid point from the Cartesian grid may be arbitrarily close to a node on the unstructured grid, illustrated in \ref{fig:shiftednodes}. In this case, we have an arbitrarily small interval on the glue grid. In the following, we investigate how this affects the projection operators and the overall discretization. More specifically, we use the same geometric setup and material parameters in the previous example. On the interface, the grid points from the Cartesian grids coincide with the nodes from the unstructured grid, except in one location where the distance is $\delta h$ where $\delta\in\{10^{-6},5\times10^{-6},10^{-5},\cdot\cdot\cdot,5\times 10^{-2},10^{-1}\}$.
We compute the largest absolute eigenvalue of the overall discretization matrix. The results are shown in Figure~\ref{fig:maxeigs} for four different grid resolutions. From the scale of the eigenvalues, it is clear that the method is robust to non-matching FD and DG grids as the eigenvalue remains essentially constant for a fixed grid.
\\
\begin{figure}[h!]
    \centering
    \begin{subfigure}[t]{0.47\linewidth}
    \centering
        \includegraphics[width=\textwidth]{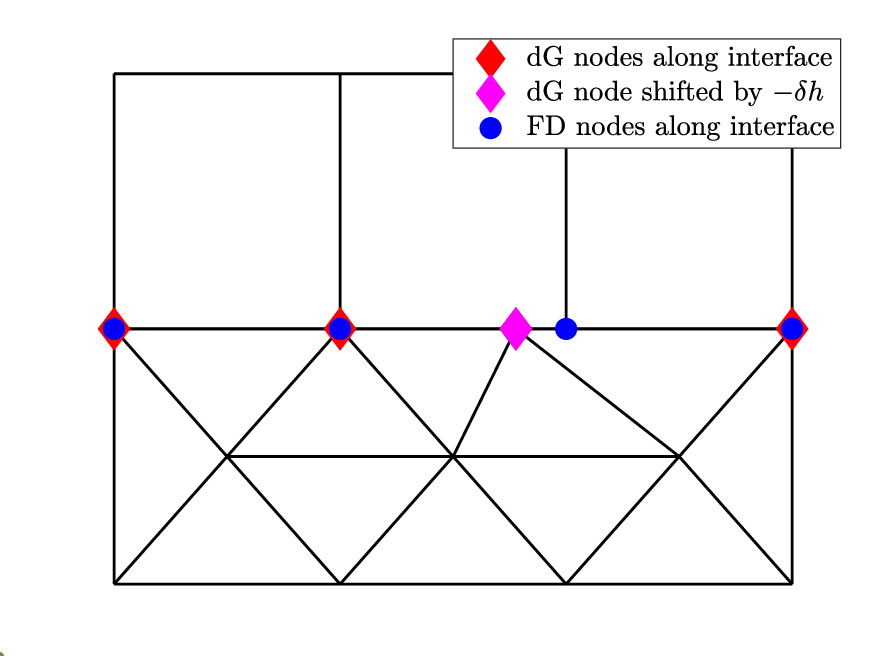}
        \subcaption{}
        \label{fig:shiftednodes}
    \end{subfigure}
       \begin{subfigure}[t]{0.47\linewidth}
        \centering
        \includegraphics[width=\textwidth]{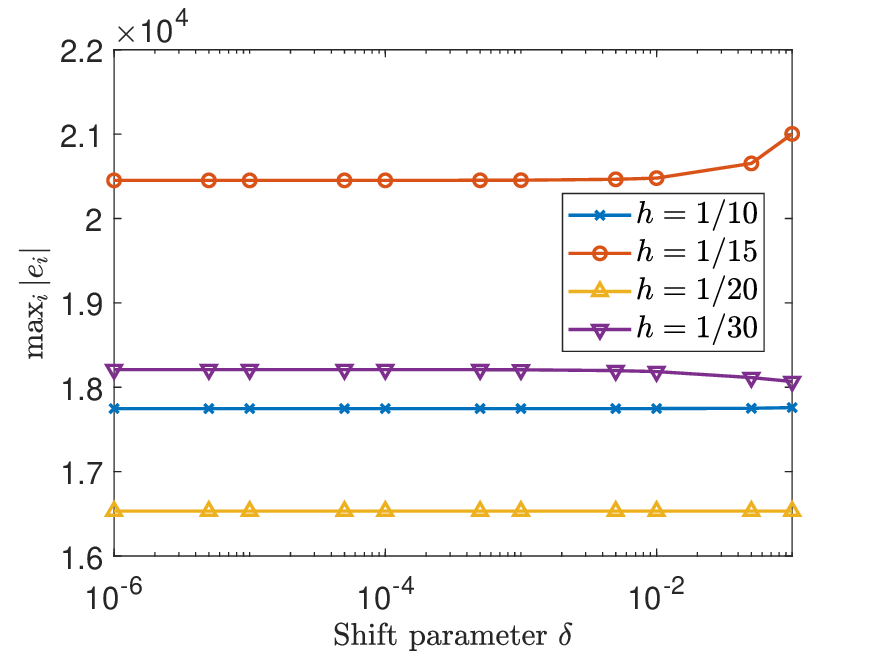}
        \subcaption{}
         \label{fig:maxiegs}
    \end{subfigure}
    \centering
    \caption{(a) An illustration of the grid on the interface (b) The largest absolute eigenvalues $\max_i|e_i|$ of the discretization matrix rescaled by $h^2$ as a function of $\delta$. Note the small interval along the $y$-axis.}
    \label{fig:maxeigs}
\end{figure}

\subsection{Convergence rate}
\subsubsection{Continuous materials}
We again consider domain $\Omega=[0,1]^2$ with all material parameters chosen to be equal, $\lambda_u=\mu_u=\lambda_v=\mu_v=1$ and with unit densities $\rho_u=\rho_v=1$. \revthree{Analogously, the stability parameters are chosen to be equal in both domains and given by $\alpha=\beta=60$.} Then, we shall investigate the accuracy of the scheme by using the method of manufactured solution when the FD grid points match the triangle nodes of the IPDG on the interface. Let $N$ denote the number of FD grid points on the interface. We note that even in this case, the projection operators are not identity operators because of different discrete norm in the FD and DG discretization. The solution is chosen to be given by the two fields
$$w_1=\sin(5x+7y-t)\quad\text{ and}\quad w_2=\cos(7x+5y-2t),$$
in both the FD and DG domain. We impose traction boundary conditions, and compute the boundary data and the forcing function by using the manufactured solution.

  We shall carry out two experiments in this subsection, where the error in both are calculated as $||\ww-\ww_h||_h$ where $\ww=[\uu,\vv]^T$ and the norm is given by $||\ww||_h^2=\uu^T\HH\uu+(\vv,\vv)_{\Omega_v}$. In both of the cases, the wave evolves until the final time $T=1$.

In the first experiment, we partition the domain according to $\Omega=\Omega_u\cup\Omega_v$ with $\Omega_u=[0,1]\times[1/5,1]$ and $\Omega_v=[0,1]\times[0,1/5]$ and perform a convergence study when refining the mesh by increasing $N$ by increments of ten, starting at $N=21$. An illustration of the coarsest mesh is given in Fig.~\ref{fig:firstdomain} and the resulting discrete errors in $\|\cdot\|_h$-norm are presented in Fig.~\ref{fig:trigsolnaligned}. Moreover, we present the convergence rate in Table~\ref{table:results2}. From the table, it is clear that the scheme demonstrates a convergence rate of four for the fourth order SBP operator and five for the sixth order one. This follows the optimal convergence trend $\min\{2p,p+2\}$ for an FD method of order $2p$, which also is the case for the FD-DG discretization in second-order form of the scalar wave equation \cite{Wang1}. 

\begin{table}[h!]
    \centering
    \begin{tabular}{|c|c|c|c|c|}
    \hline
      $N$   & Error $p=2$ & EOC $p=2$ & Error $p=3$ & EOC $p=3$\\
      \hline
        $21$ & $4.408\times 10^{-3}$ & - & $5.691\times 10^{-3}$ & -  \\
         $31$ &$9.373\times 10^{-4}$ &$3.818$ & $6.917\times 10^{-4}$ & $5.198$ \\
         $41$ &$3.001\times 10^{-4}$& $3.958$&$1.597\times 10^{-4}$& $5.095$\\
         $51$ & $1.191\times 10^{-4}$& $4.139$&$5.130\times 10^{-5}$& $5.089$\\
         $61$ &$5.633\times 10^{-5}$ &$4.109$ &$2.042\times 10^{-5}$&$5.052$\\
         $71$ &$2.989\times 10^{-5}$ &$4.104$&$9.368\times 10^{-6}$& $5.055$\\
         \hline
    \end{tabular}
    \caption{Discrete errors in $||\cdot||_h$-norm with associated EOCs at final time $T=1$ in the case of a continuous material.}
    \label{table:results2}
\end{table}

\begin{figure}[h]
    \centering
    \begin{subfigure}[t]{0.47\linewidth}
    \centering
        \includegraphics[width=\textwidth]{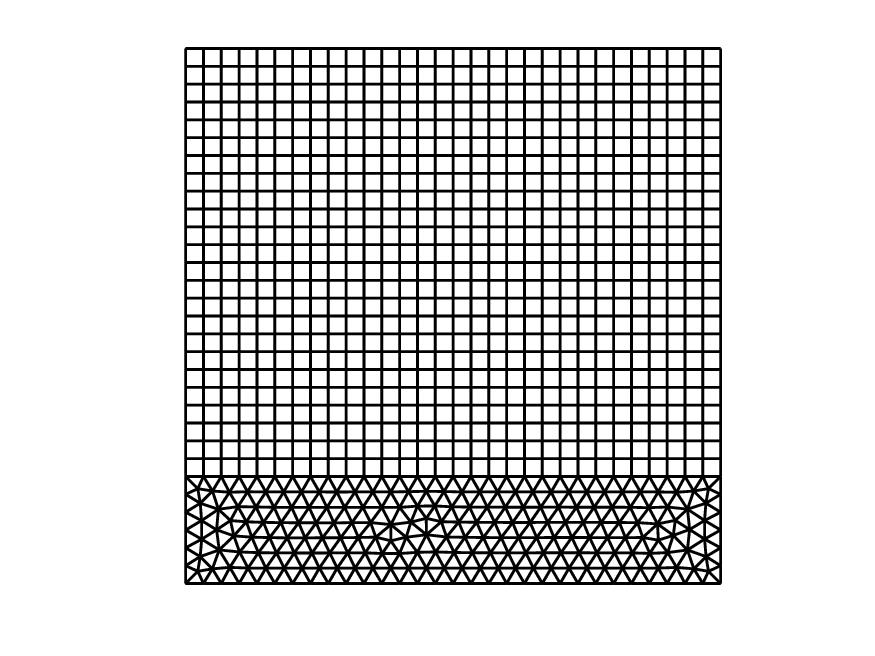}
        \subcaption{}
        \label{fig:firstdomain}
    \end{subfigure}
       \begin{subfigure}[t]{0.47\linewidth}
        \centering
        \includegraphics[width=\textwidth]{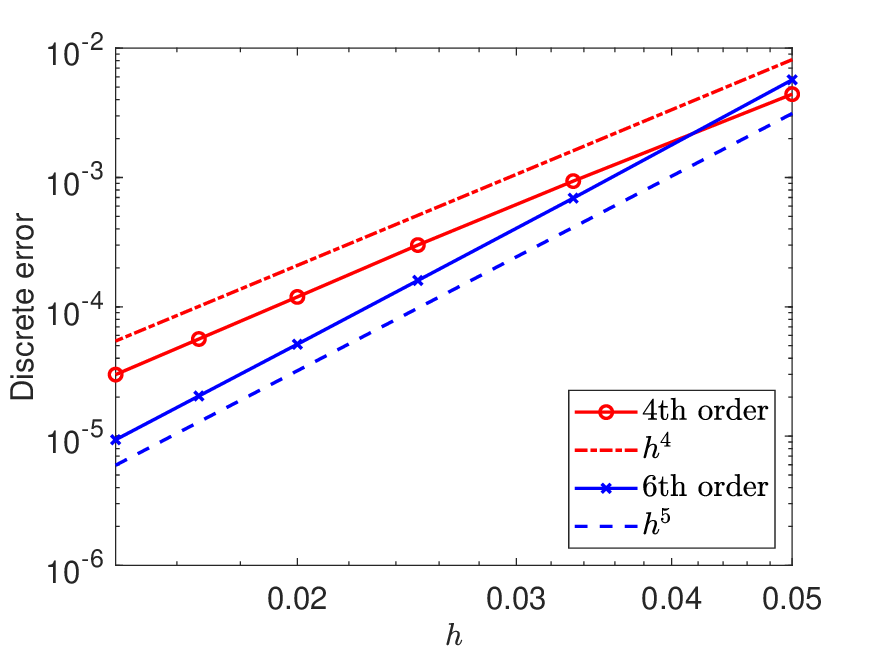}
        \subcaption{}
         \label{fig:trigsolnaligned}
    \end{subfigure}
    \small{\caption{(a) The coarsest discretization of the domain used with matching grids (b) The discrete error in $||\cdot||_h$-norm at the final time $T=1$.}}
\end{figure}

\revthree{When using a DG scheme based on polynomial degree higher than one, there are interior nodes on the triangles. We now perform the same experiment but on a mesh where the FD grid points match all nodes on the triangle edges on the interface, see an illustration in Fig.~\ref{fig:aligningdomain}. The discrete errors in $\|\cdot\|_h$-norm are presented in Table~\ref{table:aligningnodesresult} and Fig.~\ref{fig:aligningnodes}. We observe that the scheme maintains the expected convergence rate of $\min\{2p,p+2\}$ even on very fine meshes. We have not observed a clear loss of a half order in convergence rate as one would expect to happen for the SBP-FD discretization, demonstrated on a curvilinear mesh in \cite{Almquist1}.}

\begin{figure}[h]
    \centering
    \begin{subfigure}[t]{0.47\linewidth}
    \centering
        \includegraphics[width=\textwidth]{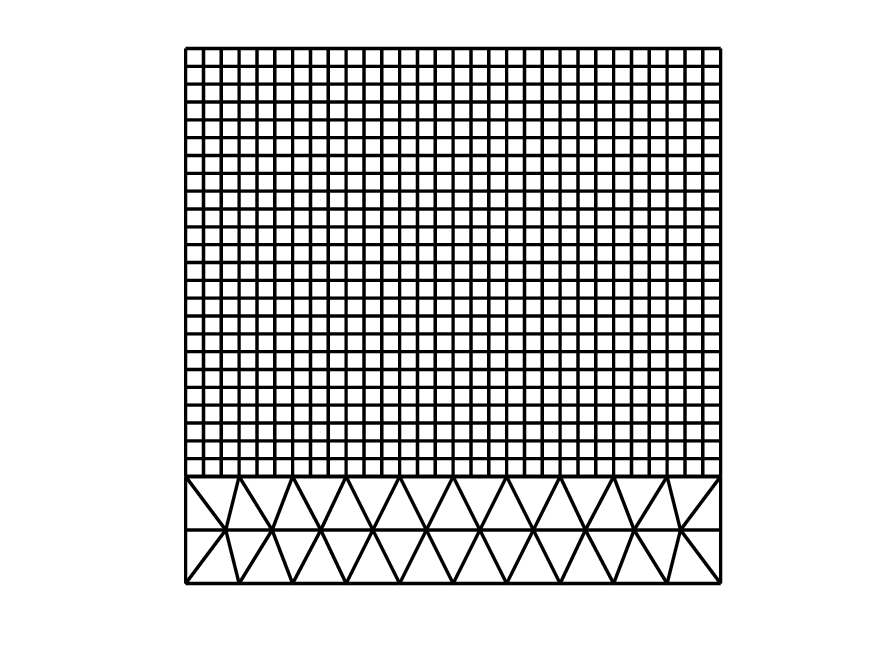}
        \subcaption{}
        \label{fig:aligningdomain}
    \end{subfigure}
       \begin{subfigure}[t]{0.47\linewidth}
        \centering
        \includegraphics[width=\textwidth]{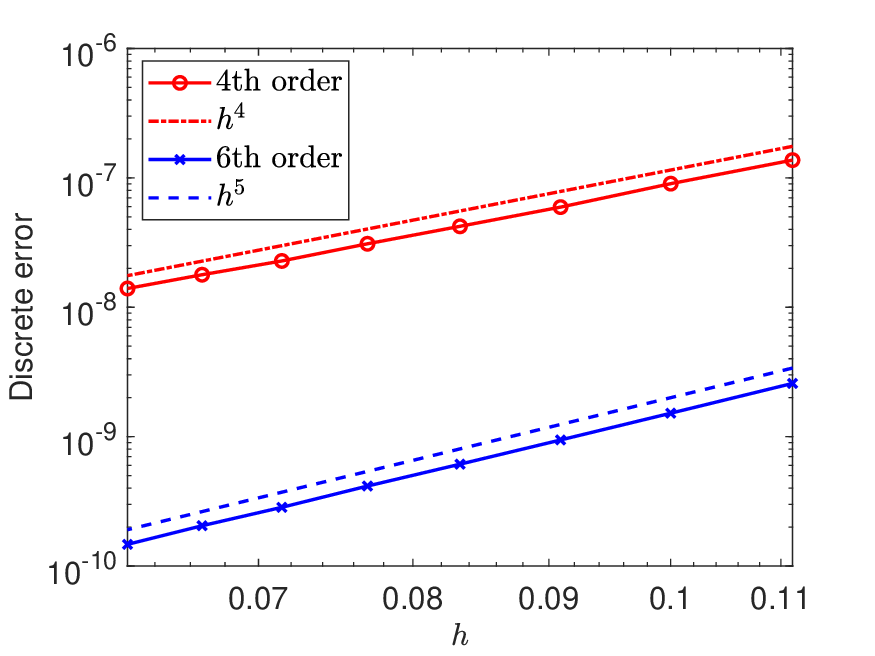}
        \subcaption{}
         \label{fig:aligningnodes}
    \end{subfigure}
    \small{\caption{(a) An illustration of the mesh structure used for mathing degrees of freedom with $p=3$, corresponding to $N=31$. (b) Numerical errors in $\|\cdot\|_h$ norm of the fourth and sixth order experiments for matching degrees of freedom along the interface.}}
\end{figure}

\begin{table}[h!]
    \centering
    \begin{tabular}{|c|c|c|c|c|c|}
    \hline
      $N$   & Error $p=2$ & EOC $p=2$ &$N$& Error $p=3$ & EOC $p=3$\\
      \hline
         $271$ &$1.372\times 10^{-7}$ &$-$ & $361$ & $2.578\times 10^{-9}$ & $-$ \\
         $301$ &$9.020\times 10^{-8}$& $3.980$&$401$ &$1.517\times 10^{-9}$& $5.031$\\
         $331$ & $5.977\times 10^{-8}$& $4.317$&$441$&$9.422\times 10^{-10}$& $5.002$\\
         $361$ &$4.223\times 10^{-8}$ &$3.992$ &$481$&$6.122\times 10^{-10}$&$4.881$\\
         $391$ &$3.094\times 10^{-8}$ &$3.886$&$521$&$4.149\times 10^{-10}$& $4.939$\\
         $421$ &$2.279\times 10^{-8}$ & $4.125$ &$561$&$2.841\times 10^{-10}$ & $5.110$ \\
         $451$ & $1.784\times 10^{-8}$ & $3.554$ &$601$& $2.048\times 10^{-10}$ & $4.745$ \\
         $481$ & $1.396\times 10^{-8}$ & $3.791$ &$641$& $1.469\times 10^{-10}$& $5.147$ \\
         \hline
         \multicolumn{2}{|c|}{Least-square EOC}  &3.985 & \multicolumn{2}{|c|}{Least-square  EOC} & 4.965\\
         \hline
    \end{tabular}
    \caption{Discrete errors in $||\cdot||_h$-norm with associated EOCs at final time $T=1$ in the case when the FD and DG degrees of freedom align. Here, $N$ denotes the number of FD grid points on the interface.}
    \label{table:aligningnodesresult}
\end{table}

\revthree{Finally, we show in Table~\ref{table:DOFs}  the number of degrees of freedom (DOFs) in the FD and DG schemes with $N=71$ FD grid points on the interface, which match the triangle nodes from the IPDG discretization. We also present the number of nonzero entries in the corresponding discretization matrices when $p=2$. From this, it is clear that the number of nonzeros in the discretization matrix for the DG domain is much larger than that for the FD domain, despite being a quarter of the size of the FD domain.}

\begin{table}[h!]
    \centering
    \begin{tabular}{|c|c|c|c|c|}
    \hline
      $N$   & FD DOFs & FD nonzeros & DG DOFs & DG nonzeros\\
      \hline
        $71$ & $5041$ & $247978$ & $22440$ & $3523200$  \\
        
         \hline
    \end{tabular}
    \caption{Number of DOFs in the FD and DG domains together with the number of non-zero entries in the corresponding discretization matrices.}
    \label{table:DOFs}
\end{table}

In our last experiment of this subsection, we shall instead let the overall domain $\Omega$ have its bottom boundary curved, parametrized by $f_s(x)=0.4x(x-1)e^{-10(x-0.5)^2}$ and partition it as in the previous example $\Omega_u=[0,1]\times[1,1/5]\times[1/5,y_s]$ , where $y_s$ denotes the function $f_s$ evaluated along $[0,1]$. Moreover, we use the same material parameters and densities as in the previous experiment, but let the solution be given by the fields

$$w_1=\sin(3x+2y-t)\quad\text{ and }\quad w_2=\sin(2x+3y-2t)$$
in both FD and DG domains. Again we let the boundary traction and forcing be determined by the chosen manufactured solution. The resulting fourth and sixth order errors are presented together with the convergence orders in Table \ref{table:curvedbdryresults} and the overall behavior is illustrated in Fig.~\ref{fig:curvedbdryresults}. We again note that both the fourth and sixth order methods result in an optimal convergence order even in the presence of a curved boundary.

It should be noted that we have used straight-edged triangles throughout the entire domain, i.e. the curved boundary is approximated by line segments, and the traction is also calculated on these segments, as illustrated on a coarse mesh in Fig.~\ref{fig:curvedbdrydomain}. This approximation of the traction is necessary to obtain an overall consistent method. When instead using the exact normal to obtain the boundary traction, a loss of two orders of accuracy was observed. This can be addressed by using triangles with curved sides close to the boundary. 

\begin{table}[h!]
    \centering
    \begin{tabular}{|c|c|c|c|c|}

    \hline
      $N$   & Error $p=2$ & EOC $p=2$ & Error $p=3$ & EOC $p=3$  \\
      \hline
        $21$ & $2.936\times 10^{-4}$ & - & $1.215\times 10^{-4}$& - \\   
         $31$ & $5.771\times 10^{-5}$&$4.012$& $1.509\times 10^{-5}$& $5.144$  \\
         $41$ &$1.823\times 10^{-5}$& $4.004$& $3.413\times 10^{-6}$&$5.168$\\
         $51$ &$7.898\times 10^{-6}$ &$3.750$& $1.08\times 10^{-6}$&$5.134$ \\
         $61$ &$3.796\times 10^{-6}$&$4.018$ & $4.446\times 10^{-7}$ &$4.895$\\
         $71$ &$2.0307\times 10^{-6}$&$4.058$ & $2.014\times 10^{-7}$ &$5.137$\\
         \hline
    \end{tabular}
    \caption{Discrete errors in $||\cdot||_h$-norm and corresponding EOCs at final time $T=1$ when the material is continuous and the bottom boundary is curved.}
    \label{table:curvedbdryresults}
\end{table}

\begin{figure}[h!]

      \centering
    \begin{subfigure}[t]{0.47\linewidth}
    \centering
        \includegraphics[width=\textwidth]{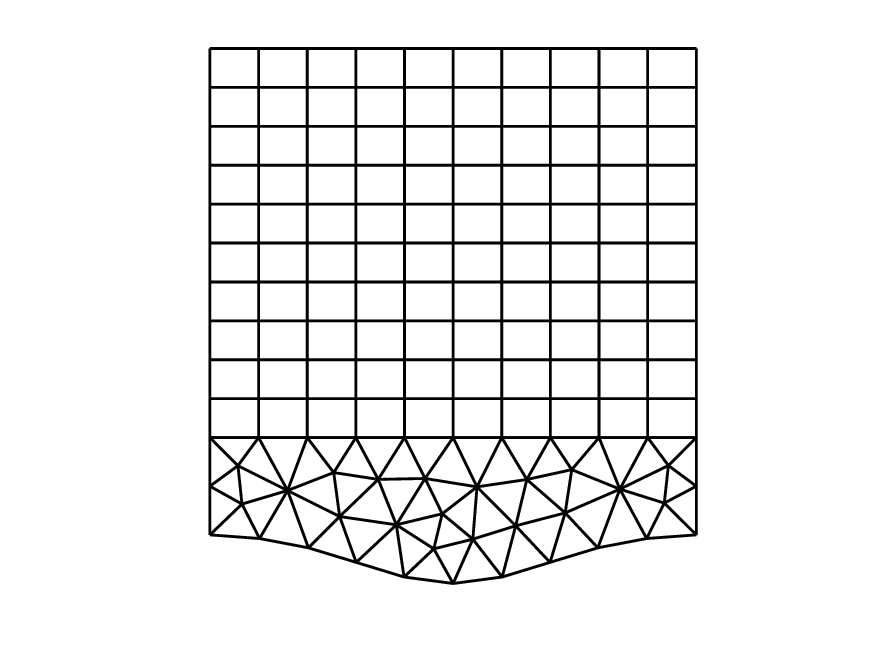}
        \subcaption{}
        \label{fig:curvedbdrydomain}
    \end{subfigure}
       \begin{subfigure}[t]{0.47\linewidth}
        \centering
        \includegraphics[width=\textwidth]{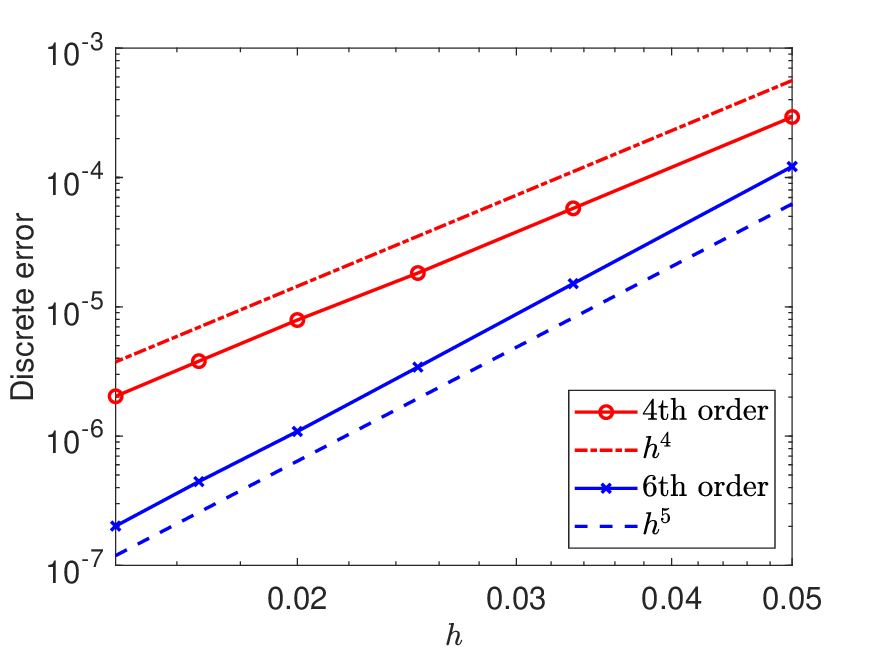}
        \subcaption{}
         \label{fig:curvedbdryresults}
    \end{subfigure}
       \caption{(a) A coarse discretization of the domain demonstrating the linear approximation of the southern boundary given by $f_s(x)$ (b) The discrete error at final time $T=1$ in $||\cdot||_h$-norm when the material is continuous and the bottom boundary of the DG domain is curvilinear, given by $f_s(x)=0.4x(x-1)e^{-10(x-0.5)^2}$ for both the fourth- and sixth-order methods.}
\end{figure}

\subsubsection{Interface waves in discontinuous materials}
If the domain $\Omega$ consists of two subdomains $\Omega_u$ and $\Omega_v$ having different material parameters, we can have waves localizing to the interface $\Gamma$ between the two. A common example of these are so-called \textit{Stoneley waves} which propagate periodically in the $x$-direction with a velocity $c_s$ and decay exponentially from the interface. \revone{ The components of the Stoneley waves propagate with  pressure and shear wave velocities
$$\alpha_u=\sqrt{\frac{2\mu_u+\lambda_u}{\rho_u}},\quad \beta_u=\sqrt{\frac{\mu_u}{\rho_u}},\quad \alpha_v=\sqrt{\frac{2\mu_v+\lambda_v}{\rho_v}},\quad \beta_v=\sqrt{\frac{\mu_v}{\rho_v}},$$}
\revone{and wave number $\xi>0$} . Assuming that $\Omega_u=[0,1]\times[y_\Gamma,1]$ and $\Omega_v=[0,1]\times[0,y_\Gamma]$ for some suitable $0<y_\Gamma<1$ we can write the waves in $\Omega_u$ on the following explicit form
\begin{align}
    \begin{bmatrix}
        u_1(\cdot,t)\\
        u_2(\cdot,t)
    \end{bmatrix}&=Ae^{-\gamma_u(y-y_\Gamma)}\begin{bmatrix}
        \xi\cos(\xi(x-c_st)) \\
        -\gamma_u\sin(\xi(x-c_st))
    \end{bmatrix}\nonumber\\
    &+Be^{-\eta_u(y-y_\Gamma)}\begin{bmatrix}
        -\eta_u\cos(\xi(x-c_st))\\
        \xi\sin(\xi(x-c_st))
    \end{bmatrix},
    \label{eq:stoneley_U}
\end{align}
where \revone{$\gamma_u=\frac{\xi}{\alpha_u}\sqrt{\alpha_u^2-c_s^2},\eta_u=\frac{\xi}{\beta_u}\sqrt{\beta_u^2-c_s^2}$} and in the domain $\Omega_v$ we instead have
\begin{align}
    \begin{bmatrix}
        v_1(\cdot,t)\\
        v_2(\cdot,t)
    \end{bmatrix}&=Ce^{\gamma_u(y-y_\Gamma)}\begin{bmatrix}
        \xi\cos(\xi(x-c_st)) \\
        \gamma_v\sin(\xi(x-c_st))
    \end{bmatrix}\nonumber\\
    &+De^{\eta_v(y-y_\Gamma)}\begin{bmatrix}
        \eta_v\cos(\xi(x-c_st))\\
        \xi\sin(\xi(x-c_st))
    \end{bmatrix}
    \label{eq:Stoneley_V}
\end{align}
with \revone{ $\gamma_v=\frac{\xi}{\alpha_v}\sqrt{\alpha_v^2-c_s^2}$ and $\eta_v=\frac{\xi}{\beta_v}\sqrt{\beta_v^2-c_s^2}$}. To determine the coefficients $A,B,C$ and $D$ and $c_s$ we substitute \eqref{eq:stoneley_U} and \eqref{eq:Stoneley_V} into the interface conditions \eqref{eqn: contcond}-\eqref{eqn: traccond} obtaining a linear system
\begin{equation}
    \mathcal{K}[A,B,C,D]^T=[0,0,0,0]^T,
    \label{eq:StoneleySystem}
\end{equation}
where the matrix $\mathcal{K}$ is given by
{\footnotesize
\begin{equation*}
    \mathcal{K}=\begin{bmatrix}
        \xi && -\eta_u && -\xi && -\eta_v\\
        -\gamma_u && \xi && -\gamma_v && -\xi\\
        -2\mu_u\gamma_u && \mu_u(\eta_u^2+\xi^2) && -2\mu_v\gamma_v\xi && -\mu_v(\eta_v^2+\xi^2)\\
        (2\mu_u+\lambda_u)\gamma_u^2-\xi^2\lambda_u && -2\mu_u\eta_u\xi && -(2\mu_v+\lambda_v)\gamma_u^2+\xi^2\lambda_v && -2\mu_v\eta_v\xi
    \end{bmatrix},
\end{equation*}
}
which depends implicitly on the wave speed $c_s$. In order for \eqref{eq:StoneleySystem} to have a unique solution, we fix the wave speed $c_s$ by solving $\det{\mathcal{K}(c_s)}=0$ . Then it should be noted that the equation is underdetermined, which allows us to set $A=1$ and then solve for $B,C$ and $D$ using the previously determined $c_s$. For a deeper discussion on the theory for this type of waves and their discretizations, see \cite{Graff}. 

In this experiment, we use the same computational domain as previously, $\Omega_u=[0,1]\times[1/5,1]$, $\Omega_v=[0,1]\times[0,1/5]$, fix the wavelength to $1$ by setting $\xi=2\pi$ \revthree{and let the stability parameters be given by $\alpha=\beta=150/c_s$}. The nodes along the FD-DG interface are chosen to match and be given by $N$ as in the previous experiments. The material parameters used, together with the resulting amplitudes $B,C$,$D$ and wave speed $c_s$ are presented in Table~\ref{table:stoneleyprops}.
\begin{table}[h!]
    \centering
    \begin{tabular}{|c|c|c|c|c|c|}
        \hline
       $\Omega_u$ & Value & $\Omega_v$ & Value & Amplitudes & Value\\
        \hline
         $\mu_u$& $0.1$ & $\mu_v$ & $0.2$ & $B$ & $-3.417353225815728$ \\
         $\lambda_u$ & $1$ & $\lambda_v$ & $3$ & $C$ & $1.792431528662961$\\
         $\rho_u$ & $1$ & $\rho_v$ & $1.9999$ & $D$ & $ -6.114370209408487$\\
         \hline
         $c_s$ & \multicolumn{5}{|c|}{$0.315138096754869$}\\
         \hline
        
    \end{tabular}
    \caption{Parameters used in the Stoneley wave convergence study with the resulting wave speed $c_s$ and amplitudes when fixing $A=1$.}
    \label{table:stoneleyprops}
\end{table}
Letting the boundary traction and forcingr follow from the manufactured solutions \eqref{eq:stoneley_U}-\eqref{eq:Stoneley_V}, we obtain the errors and convergence orders presented in Table~\ref{table:StoneleyResults}, with the overall behaviour illustrated in Figure~\ref{fig:StoneleyErrors}. The resulting fields at the first time-step are illustrated in Figure~\ref{fig:StoneleyField1}-\ref{fig:StoneleyField2}. While the convergence orders follow the same trend as in previous examples we note that the overall errors are larger in magnitude. This should at least partly be attributed to the large scaling used in the manufactured solution. 

\begin{table}[h!]
    \centering
    \begin{tabular}{|c|c|c|c|c|}
   
    \hline
      $N$   & Error $p=2$ & EOC $p=2$ & Error $p=3$ & EOC $p=3$ \\
      \hline
        $81$ & $5.587\times 10^{-4}$ &$ - $&$1.870\times 10^{-4}$ & - \\
         $91$ & $3.397\times 10^{-4}$ &$4.222$  &$1.041\times 10^{-4}$ & $4.968$ \\
         $101$ & $2.232\times 10^{-4}$ & $3.988$& $6.173\times 10^{-5}$  & $4.967$\\
         $111$ & $1.538\times 10^{-4}$&$3.906$ & $3.838\times 10^{-5}$ & $4.986$ \\
         $121$ & $1.084\times 10^{-4}$&$4.021$&$2.488\times 10^{-5}$ & $4.979$ \\
         $131$ & $7.846\times 10^{-5}$&$4.037$& $1.670\times 10^{-5}$ & $4.978$ \\
         \hline
    \end{tabular}
    \caption{Convergence rates and discrete errors in $||\cdot||_h$-norm for the Stoneley wave problem at time $T=1$ for the fourth and sixth order methods.}
    \label{table:StoneleyResults}
\end{table}

\begin{figure}[h!]
    \centering
    \begin{subfigure}[t]{0.225\textwidth}
    \centering
        \includegraphics[width=\textwidth]{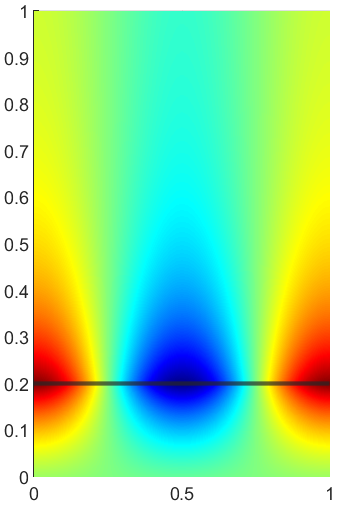} 
        \subcaption{}
          \label{fig:StoneleyField1}
    \end{subfigure}
      \begin{subfigure}[t]{0.225\textwidth}
    \centering
        \includegraphics[width=\textwidth]{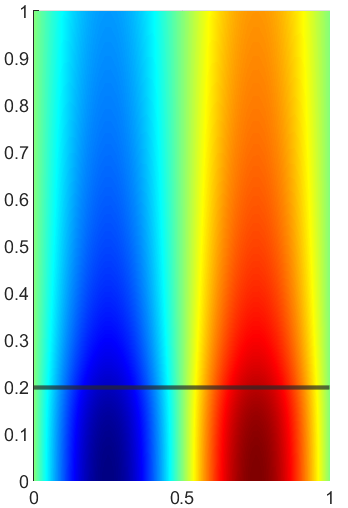} 
        \subcaption{}
          \label{fig:StoneleyField2}
    \end{subfigure}
       \begin{subfigure}[t]{0.45\textwidth}
        \centering
        \includegraphics[width=\textwidth]{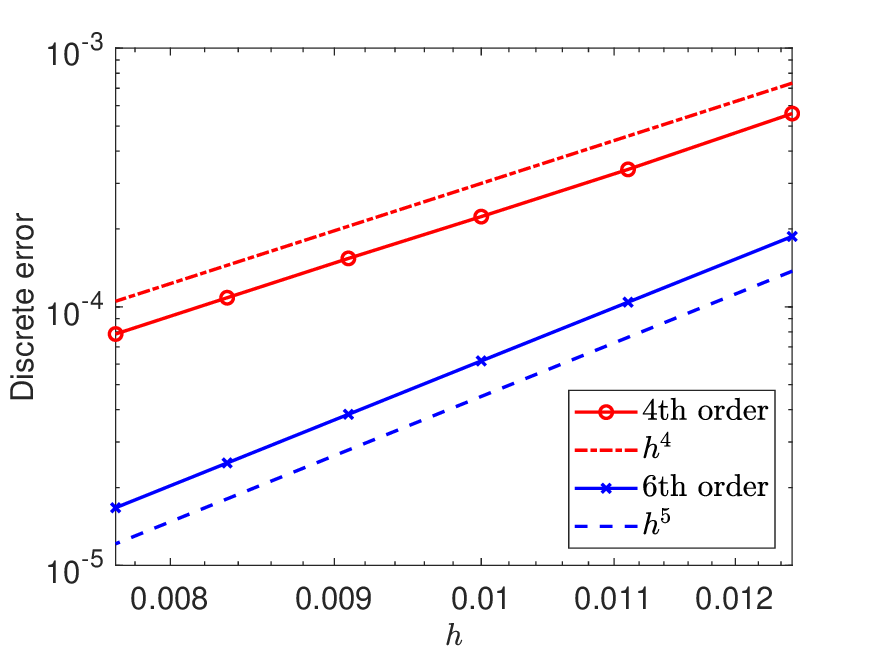} 
        \subcaption{}
        \label{fig:StoneleyErrors}
        \end{subfigure}

    \caption{(a)-(b): Illustration of the resulting horizontal and vertical fields $[u_1,v_1]^T, [u_2,v_2]^T$ at the first time-step (c): Numerical errors in $||\cdot||_h$-norm of the Stoneley waves for fourth and sixth order experiments.}
\end{figure}

\subsection{Complex geometry}
Finally, to demonstrate stability we consider a layered domain $\Omega=[0,1]\times[0,2]$ containing a complex geometric feature. The two layers are given by $\Omega_u=[0,1]\times[0.2,2]$, $\Omega_v=[0,1]\times[0,0.2]$, where the geometric feature is in $\Omega_v$. Moreover, we let the material parameters in the two domains be given by $\mu_u=0.1, \lambda_u=1$, $\mu_v=0.01$, $\lambda_v=0.1$ and densities $\rho_u=\rho_v=1$.  This motivates using an unstructured method in $\Omega_v$.  We let the two meshes along the interface be conforming and use the same penalization parameters $\alpha=\beta=60$ in both domains. The fields are then initialized at $t=0$ as two symmetric Gaussian pulses given by
\begin{align}
    w(x,y)&=550e^{-1000[(x-0.4)^2+(y-0.135)^2]},\nonumber\\
    &+550e^{-1000[(x-0.6)^2+(y-0.135)^2]}.
    \label{eq:GaussianPulse}
\end{align}
The pules then propagate freely without any external load, imposing traction free conditions on the domain boundaries as well as the internal cavities. The resulting distribution of the magnitude of the pulse at various time steps are shown in Figure~\ref{fig:pulse1}-\ref{fig:pulse4}.

In Figure~\ref{fig:pulse1}, we see the internal geometry in $\Omega_v$ clearly. \revone{Resolving the geometry using a curvilinear FD grid would be impractical as it would require us to partition the domain into a large number of blocks to resolve the arcs and corners properly.} Moreover, we see the effect of the pulse propagating from $\Omega_v$ into $\Omega_u$ continuously. In Figure~\ref{fig:pulse2}, we can clearly see the shift in wavelength of the solution as it enters a domain with larger wave speed, together with the reflections arising at the interface. Figure~\ref{fig:pulse3} then demonstrates how the wave has interacted with the top boundary and produce reflections close to the interface. Finally at $T=100$ in Figure~\ref{fig:pulse4},  we observe  that the discretization remains stable.

\begin{figure}[h]
    \centering
    \begin{subfigure}[t]{0.24\textwidth}
    \centering
        \includegraphics[width=\textwidth]{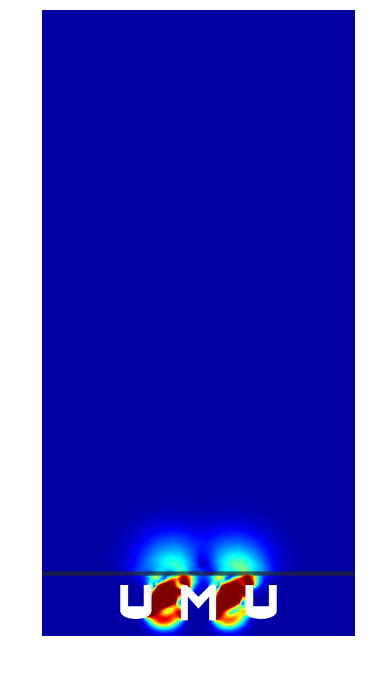} 
        \subcaption{}
          \label{fig:pulse1}
    \end{subfigure}
       \begin{subfigure}[t]{0.24\textwidth}
        \centering
        \includegraphics[width=\textwidth]{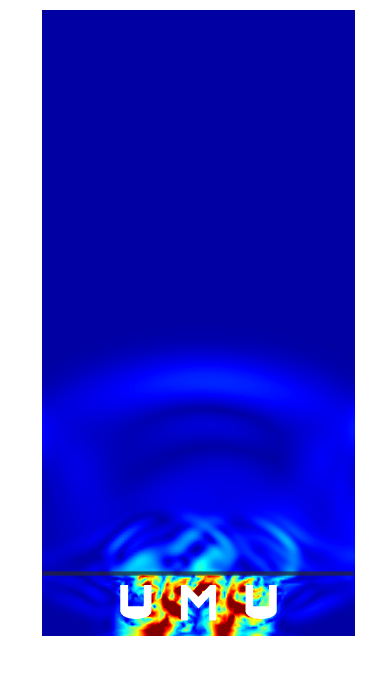}      
      \subcaption{}
         \label{fig:pulse2}
         \label{fig:pulse2}
    \end{subfigure}
       \begin{subfigure}[t]{0.24\textwidth}
        \centering
        \includegraphics[width=\textwidth]{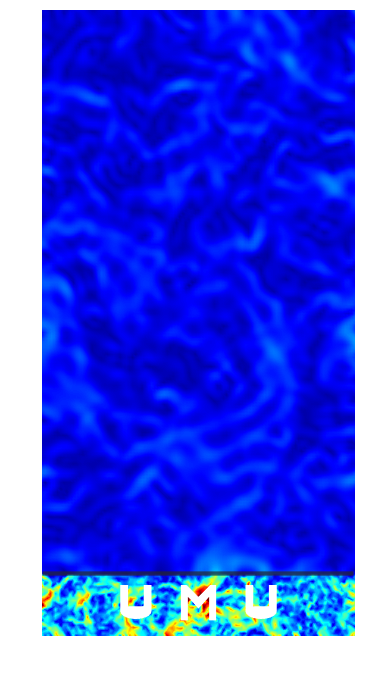}
        \subcaption{}
         \label{fig:pulse3}
    \end{subfigure}
           \begin{subfigure}[t]{0.24\textwidth}
        \centering
        \includegraphics[width=\textwidth]{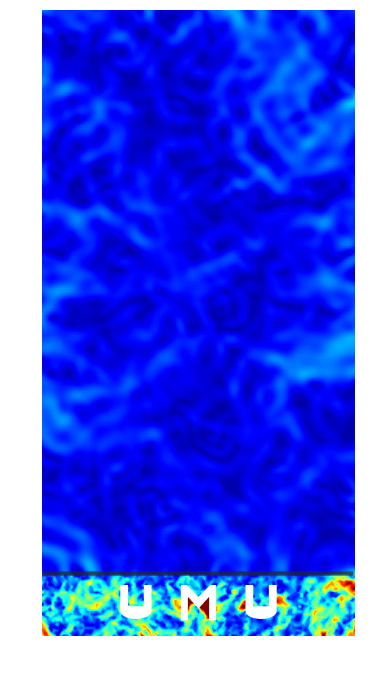}       
        \subcaption{}
         \label{fig:pulse4}
    \end{subfigure}
    \caption{The distribution of the magnitude of the Gaussian pulse given by \eqref{eq:GaussianPulse} at the time steps (a) $T=0.2202$ (b) $T=0.7202$ (c) $T=50$ and (d) $T=100$.}
\end{figure}
To end this section, we perform seismic surveys in $\Omega$ at  points $(x_{S_a},y_{S_a})=(0.1,1.8)$ and $(x_{S_b},y_{S_b})=(0.9,0.4)$ using the same setup as in the previous example. We choose the number of grid points along the interface as $N\in\{61,81,121,161\}$, with the expectation that the profile of the pulse should converge as the mesh-size decreases. The pulse is measured until $T=3$. In Figure~\ref{fig:monitor1}, we show the profile recorded at $(x_{S_a},y_{S_a})=(0.1,1.8)$. This point is far from the interface and the wave has not reached here until $t=1.5$. The other point  $(x_{S_b},y_{S_b})=(0.9,0.4)$ is close to the interface, and the recorded profile is shown in Figure~\ref{fig:monitor2}. In both case, we observe a clear convergence when decreasing the mesh size $h$.

\begin{figure}[h!]
    \centering
    \begin{subfigure}[t]{0.49\textwidth}
    \centering
        \includegraphics[width=\textwidth]{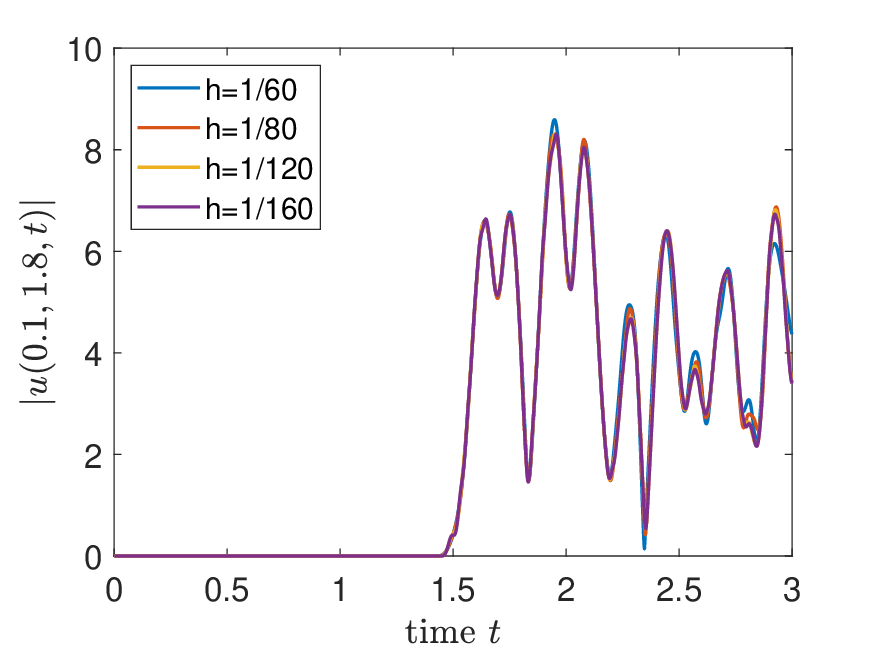} 
        \subcaption{}
          \label{fig:monitor1}
    \end{subfigure}
       \begin{subfigure}[t]{0.49\textwidth}
        \centering
        \includegraphics[width=\textwidth]{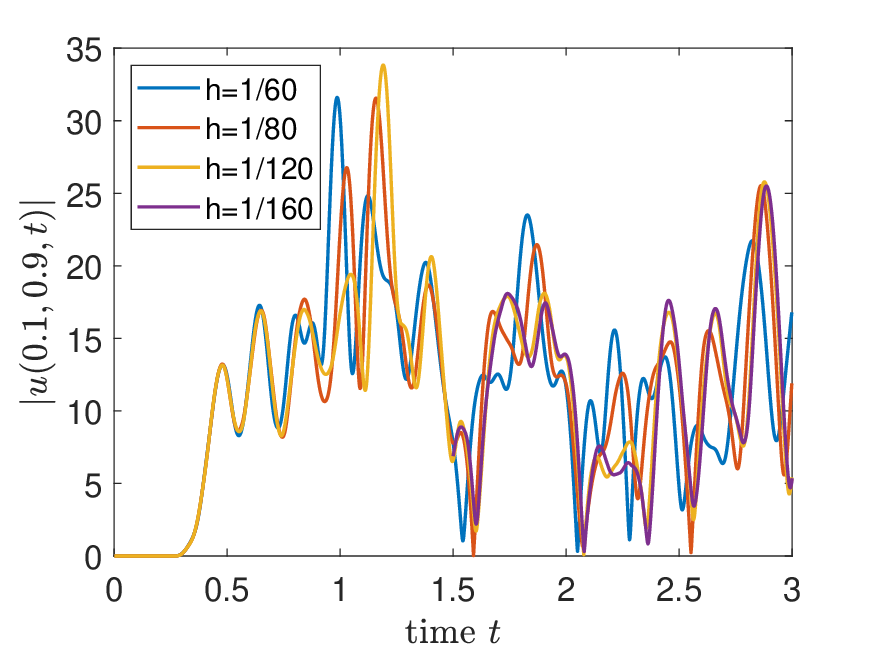} 
        \subcaption{}
        \label{fig:monitor2}
        \end{subfigure}

    \caption{The magnitude of the pulse as observed during the time interval $t\in[0,3]$ at the survey point  (a) $(x_{S_a},y_{S_a})=(0.1,1.8)$  (b)  $(x_{S_b},y_{S_b})=(0.9,0.4)$.}
    \label{fig:surveys1}
\end{figure}

\section{Conclusion}
\label{sec: conclusion}
In this paper, we have constructed a provably stable hybrid FD-DG method for solving the elastic wave equations in second-order form. Our approach uses the computationally efficient SBP finite difference method on Cartesian grids across the majority of the domain, particularly away from complex geometric features. In regions close to the complex geometry, we instead employ the  IPDG method on unstructured grid, known for its ability to resolve such complexities more efficiently. The coupling between these methods is carried out through a penalty technique using norm-compatible projection operators.

Furthermore, we have examined the influence of the projection operators on the spectrum of the overall discretization. We have found that setting all free parameters obtained during the assembly of the projection operators to zero yields a very small CFL number, rendering the method practically unusable. We have investigated different strategies to optimize the free parameters for accuracy and spectrum. Our method has also demonstrated robustness when projecting between grids that are nearly matching.

\section*{Acknowledgement}
A.G is thankful for the economical support in the form of a travel grant from the SJCKMS foundation.






\appendix

\section{Proof of positive definiteness}
\label{section:Appendix}
Here we prove that the matrix \eqref{eq:u2energy} is positive semi-define under suitable choice of $\beta_u$. Recall the matrix $\ZZ=[\ZZ_{11},\ZZ_{12} ; \ZZ_{21},\ZZ_{22}]$, then by using the definition of the material matrix $\LL_3$ we see that $\ZZ_{22}$ becomes
\begin{align*}
    \ZZ_{22}&=\begin{bmatrix}
        \mu [C'-\frac{\mu_u}{4\beta_u\chi_2}](H_x\otimes I_y) && 0\\
        0 && (2\mu_u+\lambda_u)[C'-\frac{(2\mu_u+\lambda_u)}{4\beta_u\chi_2}](H_x\otimes I_y).
    \end{bmatrix}
\end{align*}
Since the norm matrix $H_x$ is diagonal with strictly positive eigenvalues we see that $\ZZ_{22}$is invertible and positive semi-definite if $\beta_u\geq\max\bigg\{\frac{2\mu_u+\lambda_u}{4C'\chi_2},\frac{\mu_u}{4C'\chi_2}\bigg\}$ since the eigenvalues necessarily are real. From the assumption that $\mu_u+\lambda_u>0$ it must then hold that the second term is smaller and hence we must have the bound $\beta_u\geq\frac{2\mu_u+\lambda_u}{4C'\chi_2}$, which simplifies to $\frac{1}{4C'}$ if we let $\chi_2=2\mu_{u}+\lambda_u$, implying that conditions (1) and (2) are satisfied.
\\

We now turn to prove that the Schur complement $\ZZ/\ZZ_{22}$ is positive semi-definite. After some straightforward calculations we have that
$$\ZZ_{12}^T\ZZ_{22}^{-1}\ZZ_{12}=\begin{bmatrix}
    \frac{\lambda_u^2}{2\mu_u+\lambda_u}(C'-\frac{1}{4\beta_u})(H_x\otimes I_y) && 0\\
    0 && \mu[C'-\frac{\mu_u}{4\beta_u(2\mu_u+\lambda_u)}](H_x\otimes I_y)
\end{bmatrix}.$$
Upon subtracting the above term from $\ZZ_{11}$ we note that the only remaining non-zero term of the complement is $[\ZZ/\ZZ_{22}]_{11}$. Hence, to show that $\ZZ/\ZZ_{22}$ is symmetric positive semi-definite, we need to make sure that the eigenvalues of the remaining term is non-negative. It should be noted that the term only consists of a constant which scales $H_x\otimes I_y$, thus it is sufficient to show that this constant is non-negative. Multiplying by $2\mu_u+\lambda_u$, the constant term satisfies
$$(2\mu_u+\lambda_u)^2C'-\frac{\lambda_u^2}{4\beta_u}-\lambda_u^2(C'-\frac{1}{4\beta_u})=4\mu_u(\mu_u+\lambda_u)C'> 0.$$
Note that the inequality follows from the assumption of \revone{the Lamé parameters being positive}, hence the Schur conditions are satisfied. 

\end{document}